\theoremstyle{plain}
\newtheorem{thm}{Theorem}[section]
\newtheorem{cor}[thm]{Corollary}
\newtheorem{lem}[thm]{Lemma}
\newtheorem{prop}[thm]{Proposition}
\theoremstyle{definition}
\newtheorem{defn}[thm]{Definition}
\theoremstyle{definition}
\newtheorem{rem}[thm]{Remark}
\newtheorem{expl}[thm]{Example}
\newtheorem*{ass*}{Assumption}
\numberwithin{equation}{section}
\newcommand{\NN}{\mathbb{N}} 
\newcommand{\ZZ}{\mathbb{Z}} 
\newcommand{\RR}{\mathbb{R}} 
\newcommand{\CC}{\mathbb{C}} 
\newcommand{\Circle}{\mathrm{S}^{1}}    
\newcommand{\Diff}{\mathrm{Diff}}       
\newcommand{\Imm}{\mathrm{Imm}}         
\newcommand{\D}[1]{\mathcal{D}^{#1}}    
\newcommand{\HRd}[1]{H^{#1}(\Circle,\mathbb{R}^{d})}    
\newcommand{\LRd}{L^{2}(\Circle,\mathbb{R}^{d})}        
\newcommand{\norm}[1]{\left\Vert#1\right\Vert}          
\newcommand{\abs}[1]{\left\vert#1\right\vert}           
\newcommand{\set}[1]{\left\{#1\right\}}                 
\newcommand{\op}[1]{\mathbf{op}\left(#1\right)}         
\newcommand{\llangle}{\langle\!\langle}                 
\newcommand{\rrangle}{\rangle\!\rangle}                 
\newcommand{\sprod}[2]{\left\langle #1,#2 \right\rangle}
\newcommand{\ud}{\,\mathrm{d}}
\begin{document}

\title{Fractional Sobolev metrics on spaces of immersed curves}

\author{Martin Bauer}
\address{Faculty for Mathematics, Florida State University, USA}
\email{bauer@math.fsu.edu}

\author{Martins Bruveris}
\address{Department of Mathematics, Brunel University London,
  Ux\-bridge, UB8 3PH, United Kingdom}
\email{martins.bruveris@brunel.ac.uk}

\author{Boris Kolev}
\address{Aix Marseille Universit\'{e}, CNRS, Centrale Marseille, I2M, UMR 7373, 13453 Marseille, France}
\email{boris.kolev@math.cnrs.fr}

\subjclass[2010]{58D05, 35Q35}
\keywords{Sobolev metrics of fractional order} %

\date{\today}

\begin{abstract}
  Motivated by applications in the field of shape analysis, we study reparametrization invariant, fractional order Sobolev-type metrics on the space of smooth regular curves $\Imm(\Circle,\RR^d)$ and on its Sobolev completions $\mathcal{I}^{q}(\Circle,\mathbb{R}^{d})$. We prove local well-posedness of the geodesic equations both on the Banach manifold $\mathcal{I}^{q}(\Circle,\mathbb{R}^{d})$ and on the Fr\'{e}chet-manifold $\Imm(\Circle,\RR^d)$ provided the order of the metric is greater or equal to one. In addition we show that the $H^s$-metric induces a strong Riemannian metric on the Banach manifold $\mathcal{I}^{s}(\Circle,\mathbb{R}^{d})$ of the same order $s$, provided $s>\frac 32$. These investigations can be also interpreted as a generalization of the analysis for right invariant metrics on the diffeomorphism group.
\end{abstract}

\maketitle

\setcounter{tocdepth}{2}
\tableofcontents

\section{Introduction}
\label{sec:introduction}

The interest in Riemannian metrics on infinite-dimensional manifolds is fueled by their connections to mathematical physics and in particular fluid dynamics. It was Arnold who discovered in 1966 that the incompressible Euler equation, which describes the motion of an ideal fluid, has an interpretation as the geodesic equation on an infinite-dimensional manifold; the manifold in question is the group of volume-preserving diffeomorphisms equipped with the $L^{2}$-metric. Since then many other PDEs in mathematical physics have been reinterpreted as geodesic equations. Examples include Burgers' equation, which is the geodesic equation of the $L^{2}$-metric on the group of all diffeomorphisms of the circle, $\Diff(\Circle)$, and the Camassa--Holm equation \cite{CH1993}, the geodesic equation of the $H^1$-metric \cite{Kou1999} on the same group. Interestingly, geodesic equations corresponding to fractional orders in the Sobolev scale have also found applications in physics: Wunsch showed that the geodesic equation of the homogenous $\dot{H}^{1/2}$-metric on $\Diff(\Circle)$ is connected to the Constantin--Lax--Majda equation \cite{CLM1985,Wun2010,EKW2012}, which itself is a simplified model of the vorticity equation.

The geometric interpretation of a PDE as the geodesic equation enables one to show local well-posedness of the PDE. This was done first by Ebin and Marsden~\cite{EM1970} for the Euler equation. Using a similar method Constantin and Kolev showed in~\cite{CK2003} that the geodesic equation of Sobolev $H^n$-metrics on $\Diff(\Circle)$ with integer $n\geq 1$ is locally well-posed. In~\cite{EK2014} this was extend by Escher and Kolev to the Sobolev $H^r$-metrics of fractional order $r \geq \tfrac 12$. Similar results were shown for the diffeomorphism group of compact manifolds by Shkoller in \cite{Shk1998,Shk2000} and by Preston and Misiolek in~\cite{MP2010}. Fractional metrics on $\Diff(\RR^d)$ have been studied in \cite{BEK2015} by Bauer, Escher and Kolev. The local well-posedness of the geodesic equation for fractional order metrics on the diffeomorphism group of a general manifold $M$ remains an open problem.

In this paper we study the local well-posedness of a family of PDEs that arise as geodesic equations on the space $\Imm(\Circle,\RR^d)$ of immersed curves. To be precise $\Imm(\Circle,\RR^d)$ consists of smooth, closed curves with nowhere vanishing derivatives. We can regard the diffeomorphism group
\begin{equation*}
  \Diff(\Circle) \subseteq \Imm(\Circle,\Circle)
\end{equation*}
as an open subset of the space of immersions. If we replace $\Circle$ on the right hand side by $\RR^d$ we obtain the space of curves. The PDE
\begin{equation*}
  c_{tt} = -\langle D_{s} c_{t}, D_{s} c \rangle c_{t} - \langle c_{t}, D_{s} c_{t} \rangle D_{s} c - \frac 12 \abs{c_{t}}^{2} D_{s}^{2} c\,,
\end{equation*}
where $D_{s} = \frac 1{\abs{c'}} \partial_\theta$ and $c=c(t,\theta)$ is a time-dependent curve, is the geodesic equation for the $L^{2}$-metric
\begin{equation*}
  G_{c}(h,k) = \int_{\Circle} \langle h, k \rangle \abs{c'} \ud \theta\,.
\end{equation*}
This is a (weak) Riemannian metric on $\Imm(\Circle,\RR^d)$. The weight $\abs{c'}$ in the integral makes the metric invariant under the natural $\Diff(\Circle)$-action and leads to the appearance of arc length derivatives $D_{s}$ in the geodesic equation. This PDE can be seen as a generalization of the geodesic equation on $\Diff(\Circle)$,
\begin{equation*}
  \varphi_{tt} = -2 \frac{\varphi_{t} \varphi_{tx}}{\varphi_x}\,,
\end{equation*}
which becomes, when written in terms of the Eulerian velocity $u = \varphi_{t} \circ \varphi^{-1}$, Burgers' equation,
\begin{equation*}
  u_{t} = -2u_x u\,.
\end{equation*}
While the behaviour of Burgers' equation is well-known, to our knowledge, nothing is known about the local well-posedness of the $L^{2}$-geodesic equation on the space of curves.

The situation improves when we add higher derivatives to the Riemannian metric. By doing so, we get the $H^n$-metric
\begin{equation*}
  G_{c}(h,k) = \int_{\Circle} \langle A_{c} h, k \rangle \abs{c'}\ud \theta\, , \qquad A_{c} = \sum_{j=0}^n (-1)^{j} \alpha_{j} D_{s}^{2j}\,,
\end{equation*}
where $\alpha_{j} \geq 0$ are constants. The corresponding geodesic equation is
\begin{align*}
  (A_{c} c_{t})_{t} & = -\langle D_{s} c_{t}, D_{s} c \rangle A_{c} c_{t} - \langle A_{c} c_{t}, D_{s} c_{t} \rangle D_{s} c - W(c,c_{t}) D_{s}^{2} c\,,
\end{align*}
with
\begin{align*}
  W(c,c_{t}) & = \frac 12 \abs{c_{t}}^{2} + \frac{1}{2} \sum_{j=1}^n \sum_{k=1}^{2j-1} (-1)^{k+1} \langle D_{s}^{2j-k}c_{t}, D_{s}^{k}c_{t} \rangle \,.
\end{align*}
Interestingly, the behaviour of the geodesic equation is better understood in the seemingly more complicated case where $n \geq 1$. For $n=1$ the PDE is locally but generally not globally well-posed \cite{MM2007}, while for $n \geq 2$ the PDE has solutions that are global in time~\cite{BMM2014}.

Inspired by related work \cite{BEK2015, EK2014} on geodesic equations on the diffeomorphism group we consider $H^r$-metrics of non-integer order $r$, i.e.
\begin{equation*}
  G_{c}(h,k) = \int_{\Circle} \langle A_{c} h, k \rangle \abs{c'} \ud \theta\,,
\end{equation*}
with $A_{c}$ being, for each fixed curve $c$, a Fourier multiplier of order $2r$; the precise assumptions made on $A_{c}$ are described in Section~\ref{sec:metrics}. The geodesic equation takes the form
\begin{equation*}
  (A_{c} c_{t})_{t} = -\langle D_{s} c_{t}, D_{s} c \rangle A_{c} c_{t} - \langle A_{c} c_{t}, D_{s} c_{t} \rangle D_{s} c -
  (w(c,c_{t}) + w_0(c,c_{t}) D_{s}^{2} c\,,
\end{equation*}
where $w(c,c_{t})$ and $w_0(c,c_{t})$ are expressions that are defined in Theorem~\ref{thm:geodesic-equations}. It is a nonlinear evolution equation of second order in $t$ and of order $2s$ in $\theta$; the right hand side is quadratic in $c_{t}$ and highly nonlinear in $c$. For non-integer $s$, both $A_{c} c_{t}$ and $w(c,c_{t})$ are nonlocal functions of both $c$ and $c_{t}$. While not immediately obvious, we show in Example~\ref{ex:metrics-constant-coeff} that it contains the geodesic equations for the $L^{2}$-metric and the $H^n$-metrics as special cases.

Our contribution is to show in Corollary~\ref{cor:Hq-local-well-posedness} that the geodesic equation for the $H^r$-metric is locally well-posed in the Sobolev space $H^q$ for $q \geq 2r$ and $r \geq 1$.

\subsection*{Connections to shape analysis}

Throughout the article we will assume that the operator $A_{c}$ defining the metric is equivariant with respect to the action of the diffeomorphism group $\Diff(\Circle)$, i.e.,
\begin{equation*}
  A_{c\circ\varphi}(h \circ\varphi)=\left(A_{c}h\right)\circ\varphi\,, \quad \forall \varphi\in\Diff(\Circle)\,.
\end{equation*}
This is equivalent to requiring that the Riemannian metric $G$ is invariant under the action of $\Diff(\Circle)$, meaning
\begin{equation*}
  G_{c \circ \varphi}(h \circ \varphi, k \circ \varphi) = G_{c}(h,k)\,, \quad \forall \varphi\in\Diff(\Circle)\,.
\end{equation*}
This assumption is necessary in order to apply the class of Sobolev metrics in shape analysis.

The mathematical analysis of shapes has been the focus of intense research in recent years
\cite{kendall_shape:84,le-kendall:93,dryden-mardia_book:98,You2010,srivastava-klassen-book:2016} and has found applications in fields such as image analysis, computer vision, biomedical imaging and functional data analysis. An important class of shapes are outlines of planar objects. Mathematically, these shapes can be represented by equivalence classes of parametrized curves modulo the reparametrization group $\Diff(\Circle)$. This yields the following geometric picture,
\begin{align*}
  \pi : \operatorname{Imm}(\Circle,\RR^d) \to \Imm(\Circle,\RR^{d})/\Diff(\Circle)\,.
\end{align*}
A key step in shape analysis is to define an efficiently computable distance function between shapes in order to measure similarity between shapes. This distance function can be the geodesic distance induced by a Riemannian metric. Since it is difficult to work with the quotient $\Imm(\Circle,\RR^{d})/\Diff(\Circle)$ directly, the standard approach is instead to define a Riemannian metric on $\Imm(\Circle,\RR^d)$, that is $\Diff(\Circle)$-invariant.

Such a metric can then induce a Riemannian metric on the quotient space, making $\pi$ a Riemannian submersion; this metric would be given by the formula
\[
G_{\pi(c)}(u, u) = \inf_{T_c \pi.h = u} G_c(h,h)\,.
\]
Hence we will only look at $\Diff(\Circle)$-invariant metrics on $\Imm(\Circle,\RR^d)$ in this paper.


The arguably simplest invariant metric is the $L^{2}$-metric, corresponding to the operator $A_{c}=\operatorname{Id}$. However, its induced geodesic distance function is identically zero, both, on the space of parametrized curves as well as the quotient space of unparametrized curves; this was shown by Michor and Mumford \cite{MM2005,MM2006}, see also \cite{BBHM2012}. This means that between any two curves there exists a path of arbitrary short length. This property makes the metric ill-suited for most applications in shape analysis, because a notion of distance between shapes is one of the basic tools there. As a consequence higher order metrics, mostly of integer order, were studied and used successfully in applications; see \cite{MM2007,Klassen2004,Jermyn2011,You1998,Sundaramoorthi2008,Sundaramoorthi2011}. 
For an overview on various metrics on the shape space of parametrized and unparametrized curves see
\cite{MM2007,BBM2014a,BBM2016}.
As an example we have included an optimal deformation between two unparametrized curves with respect to a second order Sobolev metric in Figure~\ref{fig:geodesic}.
\begin{figure}
  \includegraphics[width=0.8\linewidth]{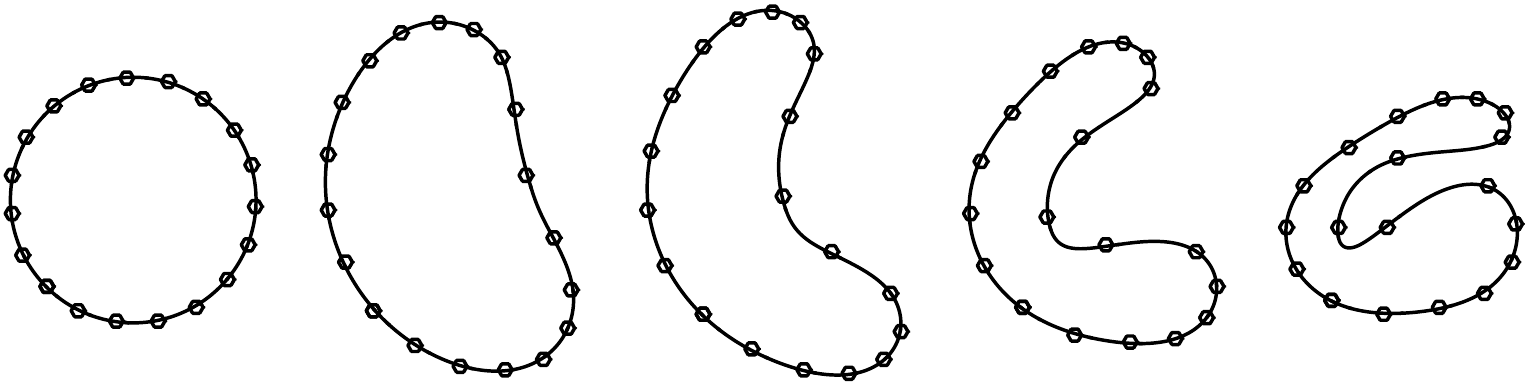}
  \caption{Example of a geodesic between two shapes with respect to an $H^{2}$-metric \cite{BBHM2016}.}
  \label{fig:geodesic}
\end{figure}
Fractional order metrics have been briefly mentioned \cite[Section 3]{MYS2008}. However, so far, an analysis of the well-posedness of the corresponding geodesic equation was missing. This is the question we will focus on in this paper.

\section{Parametrized and unparametrized curves}
\label{sec:curves}

In this article we consider the space of smooth regular curves with values in $\RR^d$
\begin{align*}
  \Imm(\Circle,\RR^{d}) := \set{c\in C^{\infty}(\Circle,\RR^{d}): \abs{c^{\prime}}\neq 0}\;.
\end{align*}
As an open subset of the Fr\'{e}chet space $C^\infty(\Circle,\RR^d)$, it is a Fr\'{e}chet manifold. Its tangent space at any curve $c$ is the vector space of smooth functions:
\begin{align*}
  T_{c}\Imm(\Circle,\RR^d)=C^{\infty}(\Circle,\RR^d)\;.
\end{align*}
The group of smooth diffeomorphisms of the circle
\begin{align*}
  \Diff(\Circle) := \set{\varphi\in C^{\infty}(\Circle,\Circle): \abs{\varphi'} > 0}
\end{align*}
acts on the space of regular curves via composition from the right:
\begin{align*}
  \Imm(\Circle,\RR^d)\times \Diff(\Circle)\rightarrow \Imm(\Circle,\RR^d),\qquad (c,\varphi)\mapsto c\circ\varphi\;.
\end{align*}
Taking the quotient with respect to this group action, we obtain the \emph{shape space}
of un-parameterized curves
\begin{align*}
  B_i(\Circle,\RR^d) := \Imm(\Circle,\RR^d)/\Diff(\Circle)\;.
\end{align*}
Note that the action of $\Diff(\Circle)$ on $\Imm(\Circle,\RR^d)$ is \emph{not free}, and thus that the quotient space
$B_i(\Circle,\RR^d)$ is not a manifold, but only an orbifold with finite isotropy groups.
A way to overcome this difficulty is to consider the slightly smaller space of \emph{free immersions} $\Imm_{\operatorname{f}}(\Circle,\RR^d)$, i.e., those immersions upon which $\Diff(\Circle)$ acts freely. This space is an
open and dense subset of $\Imm(\Circle,\RR^d)$ and the corresponding quotient space
\begin{align*}
  B_{i,\operatorname{f}}(\Circle,\RR^d) := \Imm_{\operatorname{f}}(\Circle,\RR^d)/\Diff(\Circle)\;
\end{align*}
is again a Fr\'{e}chet manifold, see \cite{CMM1991}.

\section{Riemannian metrics on immersions}
\label{sec:metrics}

Let $G$ be a Riemannian metric on $\Imm(\Circle,\RR^d)$. Motivated by applications in the field of shape analysis we require $G$ to be invariant with respect to the diffeomorphism group $\Diff(\Circle)$, i.e.,
\begin{equation}\label{eq:invariance}
  G_{c}(h,k) = G_{c\circ\varphi}(h\circ\varphi,k\circ\varphi),\qquad \forall \varphi \in \Diff(\Circle)\,.
\end{equation}
This invariance is a necessary assumption for $G$ to induce a Riemannian metric on the shape space $B_{i,f}(\Circle,\RR^d)$, such that the projection map is a Riemannian submersion.

We assume that the metric is given in the form
\begin{equation}\label{eq:metric}
  G_{c}(h,k) = \int_{\Circle}\sprod{A_{c} h}{k}\ud s ,
\end{equation}
where $A_{c}:C^\infty(\Circle,\RR^d) \to C^\infty(\Circle,\RR^d)$ is a continuous linear operator that depends on the foot point $c$. Associated to the metric is a map
\begin{equation*}
  \check G: T\Imm(\Circle,\RR^{d}) \to T^{*}\Imm(\Circle,\RR^{d}).
\end{equation*}
In terms of $A_{c}$ we have $\check G_{c} = A_{c} \otimes \ud s$. If $G$ is $\Diff(\Circle)$-invariant, then the family of operators $A_{c}$ has to be $\Diff(\Circle)$-equivariant,
\begin{equation*}
  A_{c\circ\varphi}(h\circ\varphi)=A_{c}(h)\circ\varphi,\qquad \forall \varphi \in \Diff(\Circle)\,.
\end{equation*}

\begin{expl}[Integer order Sobolev metrics]
  Motivated by their applicability in shape analysis \cite{YMSM2008,SYM2007,MYS2008,MM2007} important examples  are Sobolev metrics with \emph{constant coefficients},
  \begin{equation}\label{eq:integer-order-metric}
    G^{n}_{c}(h,k) = \sum_{j=0}^{n} \alpha_{j} \int_{\Circle} \sprod{D_{s}^{j} h}{D_{s}^{j}k} \ud s\,,
  \end{equation}
  and \emph{scale-invariant} Sobolev metrics,
  \begin{equation}\label{eq:scale-integer-metric}
    \widetilde G^{n}_{c}(h,k) = \sum_{j=0}^{n} \alpha_{j} \int_{\Circle} \ell_{c}^{2j-3} \sprod{D_{s}^{j} h}{D_{s}^{j}k} \ud s\,,
  \end{equation}
  with constants $\alpha_{j} \ge 0$, for $j = 0, \dotsc, n$; one requires $\alpha_{0}, \alpha_n > 0$ and calls $n$ the \emph{order} of the metric. Here, $D_{s}=\frac{\partial_\theta}{\abs{c^{\prime}}}$ denotes \emph{differentiation with respect to arc length}, $\ud s=\abs{c'}\ud \theta$ \emph{integration with respect to arc length} and $\ell_{c}= \int_{\Circle}\ud s$ the corresponding \emph{curve length}.

  Using integration by parts we obtain a formula for the operator $A_{c}$:
  \begin{equation}\label{eq:integer-order-operator}
    A^{n}_{c} = \sum_{j=0}^{n}(-1)^{j} \alpha_{j} \, D_{s}^{2j}
  \end{equation}
  for metrics with constant coefficients and
  \begin{equation}\label{eq:scale-integer-order-operator}
    \widetilde A^{n}_{c} = \sum_{j=0}^{n}(-1)^{j} \alpha_{j} \, \ell_{c}^{2j-3} \, D_{s}^{2j}\,.
  \end{equation}
  for scale-invariant metrics.

  Because it will be important later, we note that if $c$ is a constant speed curve, then the operator for a metric with constant coefficients is
  \begin{equation*}
    A^{n}_{c} = \sum_{j=0}^{n} (-1)^{j} \alpha_{j} \left(\frac{2\pi}{\ell_{c}}\right)^{2j} \partial_{\theta}^{2j}\,;
  \end{equation*}
  it is a differential operator with constant coefficients and the coefficients depend only on the length of the curve.
\end{expl}

Until now local and global well-posedness results have been established only for the Sobolev metrics of integer order \cite{MM2007,Bru2015,BMM2014,BHM2011}. The main goal of this article is to extend these results to \emph{metrics of fractional order}; in particular to metrics, for which the operator $A_{c}$ is defined using Fourier multipliers of a certain class.

Given a curve $c \in \Imm(\Circle,\RR^d)$, let $\psi_{c} \in \Diff(\Circle)$ be a diffeomorphism such that $c \circ \psi_{c}^{-1}$ has constant speed. Reparametrization invariance of the metric implies that
\begin{equation}\label{eq:invariant-metric}
  G_{c}(h,k) = G_{c\circ\psi_{c}^{-1}} (h\circ\psi_{c}^{-1},k\circ\psi_{c}^{-1})\,;
\end{equation}
in other words, $G$ is determined by its behaviour on constant speed curves.

\begin{rem}
  The situation is similar to that of right-invariant metrics on diffeomorphism groups, which are determined by their behaviour at the identity diffeomorphism. For curves, the invariance property is weaker and the space of constant speed curves is still quite large.
\end{rem}

Let us write the invariance property~\eqref{eq:invariant-metric} in terms of $A_{c}$: by a straight-forward calculation we obtain
\begin{align*}
  G_{c}(h,k) & = G_{c\circ\psi_{c}^{-1}} (h\circ\psi_{c}^{-1},k\circ\psi_{c}^{-1})
  \\
             & = \int_{\Circle} \sprod{A_{c \circ \psi_{c}^{-1}} (h\circ\psi_{c}^{-1})}{k\circ\psi_{c}^{-1}} \frac{\ell_{c}}{2\pi} \ud \theta
  \\
             & = \int_{\Circle} \sprod{ R_{\psi_{c}}\circ A_{c \circ \psi_{c}^{-1}} \circ R_{\psi_{c}^{-1}}(h)}{k} \ud s\,,
\end{align*}
which implies the identity
\begin{equation*}
  A_{c} = R_{\psi_{c}}\circ A_{c \circ \psi_{c}^{-1}} \circ R_{\psi_{c}^{-1}}\,.
\end{equation*}

The class of metrics on constant speed curves is large. To make it more manageable we will restrict the possible dependance of $A_{c}$ on the curve $c$.

\begin{ass*}
  We assume from this point onwards, that the operator $A_{c \circ \psi_{c}^{-1}}$ depends on $c$ only through its length $\ell_{c}$; in other words
  \begin{equation*}
    A_{c \circ \psi_{c}^{-1}} = A(\ell_{c}),
  \end{equation*}
  where $\lambda \mapsto A(\lambda)$ is a smooth curve with values in the space of linear maps, $L(C^\infty(\Circle,\RR^d), C^\infty(\Circle,\RR^d))$\footnote{This is equivalent to requiring that the map $\RR_+ \times C^\infty(\Circle,\RR^d) \to C^\infty(\Circle,\RR^d)$ given by $(\lambda, h) \mapsto A(\lambda) h$ is smooth.}.
\end{ass*}

Then, with this assumption,
\begin{equation}\label{eq:Ac-defines-Al}
  A_{c} = R_{\psi_{c}}\circ A(\ell_{c}) \circ R_{\psi_{c}^{-1}}\,.
\end{equation}

Requiring this form for the operator $A_{c}$ imposes a restriction on $A(\ell_{c})$. For $\alpha \in \Circle$ consider $\varphi_\alpha \in \Diff(\Circle)$, defined as $\varphi_\alpha(\theta) = \theta + \alpha \mod 2\pi$. If a curve $c$ has constant speed, then so does $c \circ \varphi_\alpha$ and thus $\ell_{c} = \ell_{c \circ \varphi_\alpha}$. Therefore~\eqref{eq:Ac-defines-Al} implies
\begin{equation*}
  A(\ell_{c}) \circ R_{\varphi_\alpha} = R_{\varphi_\alpha} \circ A(\ell_{c})\,,
\end{equation*}
or equivalently after differentiating with respect to $\alpha$,
\begin{equation*}
  A(\ell_{c}) \circ \partial_\theta = \partial_\theta \circ A(\ell_{c})\,.
\end{equation*}
This means that $A(\ell_{c})$ has to be a \emph{Fourier multiplier}, i.e.,
\begin{equation*}
  A(\ell_{c}).u(\theta) = \sum_{m \in \ZZ} \mathbf{a}(\ell_{c}, m).\hat{u}(m) \exp(i m \theta)\,,
\end{equation*}
where $\mathbf{a}(\ell_{c}, \cdot) : \ZZ \to \mathcal{L}(\CC^{d})$ is called the \emph{symbol} of $A(\ell_{c})$. We will write $A(\ell_{c}) = \mathbf{a}(\ell_{c}, D)$ or $A(\ell_{c}) = \op{\mathbf{a}(\ell_{c}, \cdot)}$. For now we make no assumptions on the symbol apart from requiring that the map $\ell_{c} \mapsto A(\ell_{c})$ is smooth. More control on the symbol will be necessary in order to prove well-posedness of the geodesic equation.

\begin{expl}[Integer order Sobolev metrics]
  For Sobolev metrics with constant coefficients the symbol of the operator $A^{n}(\ell_{c})$ is
  \begin{equation*}
    \mathbf{a}^{n}(\ell_{c}, m) = \left(
    \sum_{j=0}^{n} (-1)^{j} \alpha_{j} \left(\frac{2\pi m}{\ell_{c}}\right )^{2j} \right)
    \mathbf I_d\,,
  \end{equation*}
  with $\mathbf I_d \in \CC^{d\times d}$ the identity matrix, and for scale-invariant metrics it is
  \begin{equation*}
    \mathbf{\widetilde a}^{n}(\ell_{c}, m) = \left(
    \ell_{c}^{-3} \sum_{j=0}^{n} (-1)^{j} \alpha_{j} \left(2\pi m\right )^{2j} \right)
    \mathbf I_d\,,
  \end{equation*}
\end{expl}

\begin{rem}
  Even though all known situations correspond to Fourier multipliers of the form $A(\ell_{c}) = \mathbf{a}(\ell_{c}, D)$ with $\mathbf{a}(\ell_{c}, m) = a(\ell_{c}, m) \mathbf I_d$, a multiple of the identity matrix, we will treat in this article general matrix-valued symbols, because doing so introduces no additional difficulties.
\end{rem}

\section{The geodesic equation}
\label{sec:geodesic-equations}

Geodesics between two curves $c_{0}, c_{1} \in \Imm(\Circle,\RR^{d})$ are critical points of the \emph{energy functional}
\begin{align*}
  E(c) & = \frac{1}{2} \int_{0}^{1}\int_{\Circle}\langle A_{c}c_{t}, c_{t} \rangle \ud s \ud t\,,
\end{align*}
where $c=c(t, \theta)$ is a path in $\Imm(\Circle,\RR^{d})$ joining $c_{0}$ and $c_{1}$. The geodesic equation is obtained by computing the derivative of $E$ on paths with fixed endpoints. We will use the following notations
\begin{align*}
  c_{t}      & := \partial_{t}c      &
  c^{\prime} & := \partial_{\theta}c &
  v & := \frac{c^{\prime}}{\abs{c^{\prime}}} = D_{s} c\,,
\end{align*}
and
\begin{align*}
  A_{c} & = R_{\psi_{c}} \circ A(\ell_{c}) \circ R_{\psi_{c}}^{-1} &
  A_{c}^{\prime} &:= R_{\psi_{c}} \circ A^{\prime}(\ell_{c}) \circ R_{\psi_{c}}^{-1}\,,
\end{align*}
where $A^{\prime}(\ell_{c})$ is the derivative of $A(\ell_{c})$ with respect to the parameter $\ell_{c}$ and
\begin{equation*}
  \psi_{c}(\theta) = \frac{2\pi}{\ell_{c}} \int_{0}^\theta \abs{c'} \ud \sigma\,.
\end{equation*}
We have the following commutation rules for $D_{s}$ and $R_{\psi_{c}}$,
\begin{align*}
  D_{s} \circ R_{\psi_{c}} & = \frac{2\pi}{\ell_{c}} R_{\psi_{c}} \circ \partial_\theta &
  \partial_\theta \circ R_{\psi_{c}^{-1}} &= \frac{\ell_{c}}{2\pi} R_{\psi_{c}^{-1}} \circ D_{s}\,,
\end{align*}
which imply that because $A(\ell_{c})$ commutes with $\partial_\theta$, that the operator $A_{c}$ commutes with $D_{s}$,
\begin{equation*}
  D_{s} \circ A_{c} = A_{c} \circ D_{s}\,.
\end{equation*}
This will be used when computing the geodesic equation.

\begin{thm}\label{thm:geodesic-equations}
  Assume that, for each $\lambda \in \RR^+$, the operator
  \begin{equation*}
    A(\lambda) : C^\infty(\Circle,\RR^d) \to C^\infty(\Circle,\RR^d)
  \end{equation*}
  is invertible with a continuous inverse. Then the weak Riemannian metric~\eqref{eq:metric} on $\operatorname{Imm}(\Circle,\RR^{d})$ has a geodesic spray which is given by
  \begin{equation}\label{eq:spray-imm}
    F: (c,h) \mapsto \left(h, S_{c}(h)\right), \qquad T\Imm(\Circle,\RR^d) \to TT\Imm(\Circle,\RR^d)
  \end{equation}
  where
  \begin{multline}\label{eq:geodesic-spray-imm}
    S_{c}(h) = -A_{c}^{-1}\left\{ (D_{c,h} A_{c}) h + \langle D_{s} h, v \rangle A_{c} h \right.
    \\
    + \left. \langle A_{c} h, D_{s} h \rangle v + (w + w_{0}) D_{s} v \right\}\,.
  \end{multline}
  with
  \begin{equation*}
    w(c,h) = \int_{\Circle} \langle A_{c} h, D_{s} h\rangle \ud s
  \end{equation*}
  and
  \begin{equation*}
    w_{0}(c,h)  = \int_{\Circle} \frac 1{2\pi} \langle A_{c} h, \psi_{c} D_{s} h \rangle + \frac 12 \langle (\ell_{c}^{-1} A_{c} + A_{c}') h, h \rangle \ud s\,.
  \end{equation*}
  The geodesic equation is
  \begin{equation}\label{eq:geodesic-equations}
    (A_{c} c_{t})_{t} = - \langle D_{s} c_{t}, v \rangle A_{c} c_{t}
    - \langle A_{c} c_{t}, D_{s} c_{t} \rangle v - (w(c,c_{t}) + w_{0}(c,c_{t})) D_{s} v\,.
  \end{equation}
\end{thm}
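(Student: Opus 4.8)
The plan is to derive the geodesic equation by computing the first variation of the energy functional $E(c)$ over paths with fixed endpoints and then to read off the spray from the resulting second-order equation. Throughout I will exploit the decomposition $A_{c} = R_{\psi_{c}} \circ A(\ell_{c}) \circ R_{\psi_{c}}^{-1}$ and the commutation relation $D_{s} \circ A_{c} = A_{c} \circ D_{s}$ established just before the statement, since these reduce the variation of the nonlocal operator $A_{c}$ to the variation of the length $\ell_{c}$ together with the variation of the reparametrization $\psi_{c}$.

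First I would record the basic variational formulas along a two-parameter family $c(t,\theta,u)$, writing $h = \partial_u c|_{u=0}$ for the variation field: the variation of the arc-length measure is $\partial_u (\ud s) = \langle D_{s} h, v\rangle \ud s$, the variation of $D_{s}$ acting on a fixed vector field is $\partial_u D_{s} = -\langle D_{s} h, v\rangle D_{s}$, the variation of the length is $\partial_u \ell_{c} = \int_{\Circle} \langle D_{s} h, v\rangle \ud s$, and the variation of $v = D_{s} c$ is $\partial_u v = D_{s} h - \langle D_{s} h, v\rangle v$. From $\psi_{c}(\theta) = \frac{2\pi}{\ell_{c}} \int_0^\theta \abs{c'}\ud\sigma$ one computes $\partial_u \psi_{c}$ as well. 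Next I would differentiate $E(c) = \frac12\int_0^1\int_{\Circle}\langle A_{c} c_{t}, c_{t}\rangle\ud s\ud t$ in $u$; the integrand produces four types of terms, namely $\langle (D_{c,h}A_{c}) c_{t}, c_{t}\rangle$ coming from the foot-point dependence of $A_{c}$ (which, via \eqref{eq:Ac-defines-Al}, splits into a piece involving $A_{c}'$ and $\partial_u\ell_{c}$ and a piece involving the conjugation by $R_{\psi_{c}}$ and hence $\psi_{c}$), two terms $2\langle A_{c}(\partial_u c_{t}), c_{t}\rangle = 2\langle A_{c}(\partial_t h), c_{t}\rangle$ from differentiating the two occurrences of $c_{t}$, using $\partial_u c_{t} = \partial_t h$, and one term $\langle A_{c} c_{t}, c_{t}\rangle\,\langle D_{s} h, v\rangle$ from the measure. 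I would then integrate by parts in $t$ on the term with $\partial_t h$ (using the fixed-endpoint condition to kill boundary terms) to move the time derivative onto $A_{c} c_{t}$, producing $(A_{c}c_{t})_t$, and integrate by parts in $\theta$ on every term carrying a $D_{s} h$ or $D_{s}$ of anything, so that the final expression is $\int\int \langle (\text{stuff}), h\rangle\ud s\ud t = 0$ for all $h$; the Euler--Lagrange equation is then $(\text{stuff}) = 0$.

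The main obstacle will be handling the nonlocal, foot-point-dependent term $(D_{c,h}A_{c})c_{t}$ cleanly: one must differentiate the conjugation $R_{\psi_{c}} \circ A(\ell_{c}) \circ R_{\psi_{c}^{-1}}$, which contributes a length-derivative term $\ell_{c}^{-1}\langle A_{c}' c_{t}, c_{t}\rangle\,(\partial_u\ell_{c})/(\text{normalization})$ and a commutator-type term built from $\partial_u\psi_{c}$ and the fact that $A(\ell_{c})$ commutes with $\partial_\theta$; careful bookkeeping with the commutation rules $D_{s}\circ R_{\psi_{c}} = \tfrac{2\pi}{\ell_{c}}R_{\psi_{c}}\circ\partial_\theta$ shows that, after integrating the $\psi_{c}$-term by parts, these collect precisely into the two scalar weights $w(c,c_{t})$ and $w_0(c,c_{t})$ multiplying $D_{s} v$, with $w_0$ absorbing the $\ell_{c}^{-1}A_{c} + A_{c}'$ contribution and the $\frac{1}{2\pi}\langle A_{c}c_{t}, \psi_{c} D_{s} c_{t}\rangle$ term coming from the $\psi_{c}$-variation. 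Once \eqref{eq:geodesic-equations} is obtained, the spray formula \eqref{eq:geodesic-spray-imm} follows immediately: expand $(A_{c}c_{t})_t = (D_{c,c_{t}}A_{c})c_{t} + A_{c}c_{tt}$ (again using the assumption that $A(\lambda)$ is invertible with continuous inverse, so $A_{c}$ is), solve for $c_{tt} = S_{c}(c_{t})$, and check that $S_{c}$ is a smooth map on $T\Imm(\Circle,\RR^d)$, whence $F(c,h) = (h,S_{c}(h))$ is the geodesic spray. A final sanity check is to verify against Example~\ref{ex:metrics-constant-coeff} that this reduces to the known $L^2$- and $H^n$-geodesic equations, confirming that the bookkeeping of the $w$, $w_0$ terms is correct.
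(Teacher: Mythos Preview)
Your proposal is correct and follows the same overall strategy as the paper: compute the first variation of the energy, integrate by parts in $t$ and in arc length, and read off the Euler--Lagrange equation and then the spray. The organizational difference is that the paper first rewrites the energy in the constant-speed frame,
\[
E(c)=\frac12\int_0^1 \frac{\ell_c}{2\pi}\int_{\Circle}\big\langle A(\ell_c)(c_t\circ\psi_c^{-1}),\,c_t\circ\psi_c^{-1}\big\rangle\,d\theta\,dt,
\]
so that the $c$-dependence of $A_c$ never has to be differentiated directly: the variation splits automatically into a $\ell_c$-piece (yielding the $\ell_c^{-1}A_c+A_c'$ term) and a $\psi_c^{-1}$-piece (yielding the $\psi_c$-weighted integral in $w_0$), both handled via Lemma~\ref{variation:psi}. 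Your route---differentiating $\int\langle A_c c_t,c_t\rangle\,ds$ directly and then expanding $D_{c,h}A_c$ through the conjugation $R_{\psi_c}\circ A(\ell_c)\circ R_{\psi_c^{-1}}$---produces exactly the same pieces but requires computing the derivative of the conjugation explicitly, which is a bit more laborious. The paper's rewriting is what buys the cleaner bookkeeping you flag as ``the main obstacle.''

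One point you should make explicit, because it is what makes the spatial integration by parts close up: since $A_c$ is $L^2(ds)$-symmetric and commutes with $D_s$, one has $\int_{\Circle}\langle A_c c_t, D_s c_t\rangle\,ds=0$, so the antiderivative $w(\theta)=\int_0^\theta\langle A_c c_t, D_s c_t\rangle\,d\tilde s$ is genuinely periodic; without this the boundary terms from integrating $\int_0^\theta\langle D_s h,v\rangle\,d\tilde s$ against $\langle A_c c_t,D_s c_t\rangle$ would not vanish and the derivation would stall.
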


To compute these equations, we will first derive a variational formula for the reparametrization function $\psi_{c}$.

\begin{lem}\label{variation:psi}
  The derivative of the map $c \mapsto \psi_{c}$ is given by
  \begin{equation*}
    D_{c,h}\psi_{c}(\theta) = \frac{2\pi}{\ell_{c}} \int_{0}^{\theta}
    \sprod{D_{s} h}{v} \ud \tilde s - \frac{1}{\ell_{c}} \left(\int_{\Circle} \sprod{D_{s} h}{v} \ud s \right) \psi_{c}(\theta) \,.
  \end{equation*}
\end{lem}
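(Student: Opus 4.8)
The plan is to compute $D_{c,h}\psi_c$ directly from the explicit formula $\psi_c(\theta) = \frac{2\pi}{\ell_c}\int_0^\theta \abs{c'}\ud\sigma$ by the product rule, since $\psi_c$ depends on $c$ only through the two quantities $\ell_c = \int_{\Circle}\abs{c'}\ud\theta$ and $\theta\mapsto\int_0^\theta\abs{c'}\ud\sigma$. First I would record the variation of the integrand: for a path $s\mapsto c+sh$, one has $\partial_s\big|_{s=0}\abs{c'} = \frac{\sprod{c'}{h'}}{\abs{c'}} = \sprod{D_s h}{v}\,\abs{c'}$, using $v = c'/\abs{c'}$ and $D_s = \frac{1}{\abs{c'}}\partial_\theta$. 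Consequently $D_{c,h}\!\left(\int_0^\theta\abs{c'}\ud\sigma\right) = \int_0^\theta \sprod{D_s h}{v}\,\abs{c'}\ud\sigma = \int_0^\theta\sprod{D_s h}{v}\ud\tilde s$, and likewise $D_{c,h}\ell_c = \int_{\Circle}\sprod{D_s h}{v}\ud s$.

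Next I would apply the Leibniz rule to the product $\frac{2\pi}{\ell_c}\cdot\int_0^\theta\abs{c'}\ud\sigma$:
\begin{equation*}
  D_{c,h}\psi_c(\theta) = \frac{2\pi}{\ell_c}\int_0^\theta\sprod{D_s h}{v}\ud\tilde s \;-\; \frac{2\pi}{\ell_c^2}\left(\int_{\Circle}\sprod{D_s h}{v}\ud s\right)\int_0^\theta\abs{c'}\ud\sigma\,.
\end{equation*}
Finally I would rewrite the second term using the defining relation $\int_0^\theta\abs{c'}\ud\sigma = \frac{\ell_c}{2\pi}\psi_c(\theta)$, which turns $\frac{2\pi}{\ell_c^2}\int_0^\theta\abs{c'}\ud\sigma$ into $\frac{1}{\ell_c}\psi_c(\theta)$, yielding exactly the claimed formula.

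There is no real obstacle here; this is a routine differentiation. The only points demanding a little care are the chain-rule computation of $\partial_s\abs{c'}$ (making sure the factor $\abs{c'}$ is handled correctly so that the result is expressed with arc-length differentiation $D_s$ and arc-length measure $\ud s = \abs{c'}\ud\theta$ rather than with $\partial_\theta$ and $\ud\theta$), and the bookkeeping that converts the $\int_0^\theta\abs{c'}\ud\sigma$ appearing in the second term back into $\psi_c(\theta)$. One should also note that the differentiability of $c\mapsto\psi_c$ on $\Imm(\Circle,\RR^d)$ follows because $\abs{c'}$ is smooth and nowhere vanishing, so all the integrals above depend smoothly on $c$; the formula for the derivative is then justified by differentiating under the integral sign.
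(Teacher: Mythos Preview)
Your proposal is correct and follows exactly the same approach as the paper: record the variational formulas $D_{c,h}\abs{c'} = \langle D_s h, v\rangle\,\abs{c'}$ and $D_{c,h}\ell_c = \int_{\Circle}\langle D_s h, v\rangle\,\ud s$, then differentiate $\psi_c(\theta) = \frac{2\pi}{\ell_c}\int_0^\theta\abs{c'}\,\ud\sigma$ by the product rule. You have simply spelled out the bookkeeping that the paper leaves implicit.
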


\begin{proof}
  Using the variational formulas for $\abs{c'}$ and $\ell_{c}$,
  \begin{align*}
    D_{c,h} \abs{c'} & = \langle D_{s} h, v \rangle \abs{c'} &
    D_{c,h} \ell_{c} &= \int_{\Circle} \sprod{D_{s} h}{v} \ud s \,,
  \end{align*}
  the lemma follows directly from
  \begin{equation*}
    \psi_{c}(\theta) = \frac{2\pi}{\ell_{c}} \int_{0}^{\theta} \abs{c^{\prime}} \ud \sigma\,.\qedhere
  \end{equation*}
\end{proof}

\begin{proof}[Proof of Theorem~\ref{thm:geodesic-equations}]
  We can write the energy of a path $c(t,\theta)$ as
  \begin{align*}
    E(c) & = \frac{1}{2} \int_{0}^{1} \frac{\ell_{c}}{2\pi} \int_{\Circle} \sprod{A(\ell_{c})(c_{t}\circ\psi_{c}^{-1})}{c_{t}\circ\psi_{c}^{-1}} \ud \theta \, \ud t\,.
  \end{align*}
  The variation of the energy in direction $h$ is then given by
  \begin{align*}
    D_{c,h}E & = \int_{0}^1 \frac{\ell_{c}}{2\pi} \int_{\Circle}
    \sprod{A(\ell_{c}) (c_{t} \circ \psi_{c}^{-1})}
    {h_{t} \circ \psi_{c}^{-1} + \left(c^{\prime}_{t} \circ \psi_{c}^{-1} \right) D_{c,h}\,\psi_{c}^{-1} } \ud \theta \\
             & \qquad{} + \frac 12 \frac {\ell_{c}}{2\pi} D_{c,h}\ell_{c} \int_{\Circle} \sprod{\left( \ell_{c}^{-1}A(\ell_{c}) + A'(\ell_{c})\right)(c_{t} \circ \psi_{c} ^{-1})}{c_{t}\circ\psi_{c}^{-1}} \ud \theta \ud t \\
             & = \int_{0}^1 \int_{\Circle} \sprod{ A_{c} c_{t}}{h_{t} + c^{\prime}_{t} \left( D_{c,h} \psi_{c}^{-1}\right) \circ \psi_{c}} \ud s                                                                             \\
             & \qquad{} + \frac 12 D_{c,h} \ell_{c} \int_{\Circle}
    \sprod{(\ell_{c}^{-1} A_{c} + A_{c}') c_{t}}{c_{t}} \ud s \ud t \,.
  \end{align*}
  Let us start with the term involving $h_{t}$. After integrating by parts we get
  \begin{align*}
    \int_{0}^1 \int_{\Circle} \langle A_{c} c_{t}, h_{t}\rangle \ud s \ud t
      & = -\int_{0}^1 \int_{\Circle} \left\langle (A_{c} c_{t})_{t}+(A_{c} c_{t})\frac{\partial_{t} (|c^{\prime}|)}{\abs{c^{\prime}}}, h \right\rangle\! \ud s \ud t \\
      & = -\int_{0}^1 \int_{\Circle} \langle(A_{c} c_{t})_{t} + \langle D_{s} c_{t}, v \rangle A_{c} c_{t}, h \rangle \ud s \ud t\,.
  \end{align*}
  Next we deal with $\psi_{c}^{-1}$. Using the formula
  \begin{equation*}
    D_{c,h} \,\psi_{c}^{-1} = -\left(\frac{1}{\psi_{c}'} D_{c,h} \,\psi_{c}\right) \circ \psi_{c}^{-1}\,,
  \end{equation*}
  and Lemma~\ref{variation:psi} we obtain a formula for the variation of $\psi_{c}^{-1}$,
  \begin{equation*}
    \left(D_{c,h} \,\psi_{c}^{-1} \right) \circ \psi_{c}(\theta)
    = -\frac{1}{\abs{c^{\prime}}} \left( \int_{0}^{\theta} \langle D_{s} h, v \rangle \ud \tilde s
    - \frac{1}{2\pi} \left(\int_{\Circle} \langle D_{s} h, v\rangle \ud s \right) \psi_{c}(\theta) \right) \,.
  \end{equation*}
  Therefore we have
  \begin{align*}
    \int_{\Circle} & \sprod{A_{c} c_{t}}{c^{\prime}_{t} \left( D_{c,h} \psi_{c}^{-1}\right) \circ \psi_{c}} \ud s \\
               & = -\int_{\Circle} \langle A_{c} c_{t}, D_{s}c_{t} \rangle
    \left( \int_{0}^{\theta} \langle D_{s} h, v \rangle \ud \tilde s
    - \frac{1}{2\pi} \left(\int_{\Circle} \langle D_{s} h, v\rangle \ud \tilde s \right) \psi_{c} \right) \ud s \\
               & = - \int_{\Circle} \langle A_{c} c_{t}, D_{s} c_{t}\rangle
    \int_{0}^{\theta} \langle D_{s} h, v\rangle \ud \tilde s \ud s \\
               & \qquad{} + \frac 1{2\pi} \int_{\Circle} \langle D_{s} h, v \rangle \ud s
    \int_{\Circle} \langle A_{c} c_{t}, \psi_{c} D_{s} c_{t}\rangle \ud s
  \end{align*}
  Because $A_{c}$ is symmetric and commutes with $D_{s}$,
  \begin{equation*}
    \int_{\Circle} \langle A_{c} c_{t}, D_{s} c_{t}\rangle \ud s = 0\,,
  \end{equation*}
  and thus the function
  \begin{equation*}
    w(\theta) =  \int_{0}^{\theta} \langle A_{c} c_{t}, D_{s} c_{t} \rangle \ud s \,,
  \end{equation*}
  is periodic with $D_{s} w = \langle A_{c} c_{t}, D_{s} c_{t}\rangle$. Next we integrate by parts,
  \begin{align*}
    \int_{\Circle} & \sprod{A_{c} c_{t}}{c^{\prime}_{t} \left( D_{c,h} \psi_{c}^{-1}\right) \circ \psi_{c}} \ud s \\
               & = \int_{\Circle} w \langle D_{s} h, v\rangle \ud s
    + \frac 1{2\pi} \int_{\Circle} \langle D_{s} h, v \rangle \ud s
    \int_{\Circle} \langle A_{c} c_{t}, \psi_{c} D_{s} c_{t}\rangle \ud s \\
               & = -\int_{\Circle} \langle D_{s} (wv), h \rangle \ud s
    - \frac 1{2\pi} \int_{\Circle} \langle D_{s} v, h \rangle \ud s
    \int_{\Circle} \langle A_{c} c_{t}, \psi_{c} D_{s} c_{t}\rangle \ud s\,.
  \end{align*}
  Finally we consider the term involving $D_{c,h} \ell_{c}$.
  Since
  \begin{equation*}
    D_{c,h}\ell_{c}
    = \int_{\Circle} \langle D_{s} h, v \rangle \ud s
    = - \int_{\Circle} \langle D_{s} v, h \rangle \ud s\,,
  \end{equation*}
  we have
  \begin{multline*}
    \frac 12 D_{c,h} \ell_{c} \int_{\Circle}
    \langle (\ell_{c}^{-1} A_{c} + A_{c}') c_{t}, c_{t} \rangle \ud s = \\
    = - \frac 12 \int_{\Circle} \langle D_{s} v, h \rangle \ud s \int_{\Circle} \langle (\ell_{c}^{-1} A_{c} + A_{c}') c_{t}, c_{t} \rangle \ud s\,.
  \end{multline*}
  Grouping all these expressions together, we can express $D_{c,h}E$ as
  \begin{multline}\label{eq:DE-formula}
    D_{c,h} E(c) = \\
    \int_{0}^1 \int_{\Circle}
    \left\langle -(A_{c} c_{t})_{t} - \langle D_{s} c_{t}, v \rangle A_{c} c_{t}
    - D_{s}(wv) - w_{0} D_{s} v, h \right\rangle \ud s \ud t\,,
  \end{multline}
  where
  \begin{equation*}
    w_{0} = \int_{\Circle} \left( \frac{1}{2\pi} \langle A_{c} c_{t}, \psi_{c} D_{s} c_{t} \rangle
    + \frac{1}{2} \langle (\ell_{c}^{-1} A_{c} + A_{c}') c_{t}, c_{t} \rangle \right) \ud s\,.
  \end{equation*}
  Thus we obtain the geodesic equation
  \begin{equation*}
    (A_{c} c_{t})_{t} = - \langle D_{s} c_{t}, v \rangle A_{c} c_{t}
    - D_{s}(wv) - w_{0} D_{s} v\,.
  \end{equation*}
  The existence of the geodesic spray follows from~\eqref{eq:DE-formula}, which can be rewritten as
  \begin{equation*}
    D_{c,h}E(c) = \int_{0}^1 G_{c}(-c_{tt} + S_{c}(c_{t}), h) \ud t\,.
  \end{equation*}
  Note that in this last step we used the invertibility of $A_{c}$ on $C^\infty(\Circle,\RR^d)$.
\end{proof}

Next we will see how the geodesic equation simplifies for Sobolev metrics with constant coefficients and scale-invariant metrics. First note, that using integration by parts we can write
\begin{align*}
  \frac 1{2\pi} \int_{\Circle} \langle A_{c} c_{t}, \psi_{c} D_{s} c_{t} \rangle \ud s
    & = \frac 1{\ell_{c}} \int_{\Circle} \langle A_{c} c_{t}, D_{s} c_{t} \rangle
  \int_{0}^\theta \abs{c'} \ud \sigma \ud s \\
    & = - \frac 1{\ell_{c}} \int_{\Circle} \int_{0}^\theta \langle A_{c} c_{t}, D_{s} c_{t} \rangle
  \ud \tilde s \ud s\,.
\end{align*}
We shall also make use of the following identity, valid for $j \geq 1$ and $h \in C^\infty(\Circle,\RR^d$),
\begin{equation*}
  \langle D_{s}^{2j}h, D_{s}h\rangle = D_{s} \left(\frac{1}{2} \sum_{k=1}^{2j-1} (-1)^{k+1} \langle D_{s}^{2j-k}h, D_{s}^{k}h \rangle \right)\,.
\end{equation*}
Setting
\begin{align*}
  W_{j} & = \frac{1}{2} \sum_{k=1}^{2j-1} (-1)^{k+1} \langle D_{s}^{2j-k}c_{t}, D_{s}^{k}c_{t} \rangle \;\;\text{for }j\geq 1\,,
        & W_{0}
        & = \frac 12 \abs{c_{t}}^{2} \,,
\end{align*}
we obtain
\begin{equation*}
  \int_{0}^\theta \langle D_{s}^{2j}c_{t}, D_{s}c_{t}\rangle \ud \tilde s = W_{j}(\theta) - W_{j}(0)\,,
\end{equation*}
as well as
\begin{equation*}
  \int_{\Circle} W_{j}(\theta) \ud s = \frac 12 (1-2j) \int_{\Circle} \langle D_{s}^{2j} c_{t}, c_{t} \rangle \ud s\,.
\end{equation*}

\begin{expl}[Metrics with constant coefficients]
  \label{ex:metrics-constant-coeff}
  For metrics with constant coefficients we have
  \begin{align*}
    A^{n}_{c} & = \sum_{j=0}^{n} (-1)^{j} \alpha_{j} D_{s}^{2j},
              & A^{n}(\ell_{c})
              & = \sum_{j=0}^{n} (-1)^{j} \alpha_{j} \left( \frac{2\pi}{\ell_{c}} \right)^{2j} \partial_\theta^{2j}\,,
  \end{align*}
  and thus their $\ell_{c}$-derivatives are
  \begin{align*}
    (A^{n}_{c})' & = -2\ell_{c}^{-1} \sum_{j=0}^{n} (-1)^{j} j \alpha_{j} D_{s}^{2j},
                 & A'(\ell_{c})
                 & = -2 \ell_{c}^{-1} \sum_{j=0}^{n} (-1)^{j} j \alpha_{j} \left( \frac{2\pi}{\ell_{c}} \right)^{2j} \partial_\theta^{2j}\,.
  \end{align*}
  Therefore
  \begin{equation*}
    \ell_{c}^{-1} A_{c}^{n} + (A_{c}^{n})' = \ell_{c}^{-1} \sum_{j=0}^{n} (-1)^{j} (1-2j) \alpha_{j} D_{s}^{2j}\,,
  \end{equation*}
  and
  \begin{align*}
    w_{0} & = \frac 1{\ell_{c}} \sum_{j=0}^{n} (-1)^{j} \alpha_{j} \left[- \int_{\Circle} W_{j}(\theta) - W_{j}(0) \ud s + \frac 1{2}(1-2j) \int_{\Circle} \langle D_{s}^{2j} c_{t}, c_{t} \rangle \ud s \right] \\
          & = \sum_{j=0}^{n} (-1)^{j} \alpha_{j} W_{j}(0)\,.
  \end{align*}
  Since
  \begin{equation*}
    w = \sum_{j=0}^{n} (-1)^{j} \alpha_{j} \int_{0}^\theta \langle D_{s}^{2j} c_{t}, D_{s} c_{t} \rangle \ud \tilde s
    = \sum_{j=0}^{n} (-1)^{j} \alpha_{j} \left( W_{j}(\theta) - W_{j}(0) \right)\,,
  \end{equation*}
  it follows that
  \begin{equation*}
    w(\theta)+w_{0} = \sum_{j=0}^{n} (-1)^{j} \alpha_{j} W_{j}(\theta)\,.
  \end{equation*}
  Thus the geodesic equation has the form
  \begin{equation*}
    (A_{c} c_{t})_{t} = - \langle D_{s} c_{t}, v \rangle A_{c} c_{t}
    - \langle A_{c} c_{t}, D_{s} c_{t} \rangle v - \left( \sum_{j=0}^{n} (-1)^{j} \alpha_{j} W_{j} \right) D_{s} v\,.
  \end{equation*}
  Thus we have regained the formula from \cite{MM2007}.
\end{expl}

\begin{expl}[Scale-invariant metrics]
  For scale-invariant metrics we have
  \begin{align*}
    \tilde A^{n}_{c} & = \sum_{j=0}^{n} (-1)^{j} \alpha_{j} \ell_{c}^{2j-3} D_{s}^{2j},
                     & \tilde A^{n}(\ell_{c})
                     & = \ell_{c}^{-3} \sum_{j=0}^{n} (-1)^{j} \alpha_{j} (2\pi)^{2j} \partial_\theta^{2j}\,,
  \end{align*}
  and thus their $\ell_{c}$-derivatives are
  \begin{align*}
    (A^{n}_{c})' & = -3\ell_{c}^{-1} \sum_{j=0}^{n} (-1)^{j} \alpha_{j} \ell_{c}^{2j-3} D_{s}^{2j},
                 & \tilde A'(\ell_{c})
                 & = -3 \ell_{c}^{-4} \sum_{j=0}^{n} (-1)^{j} \alpha_{j} \left( 2\pi \right)^{2j} \partial_\theta^{2j}\,.
  \end{align*}
  Therefore
  \begin{equation*}
    \ell_{c}^{-1} \tilde A^{n}_{c} + (\tilde A^{n}_{c})' = -2 \ell_{c}^{-1} \tilde A^{n}_{c}\,.
  \end{equation*}
  A similar calculation as before gives
  \begin{align*}
    w_{0}     & = \sum_{j=0}^{n} (-1)^{j} \alpha_{j} \ell_{c}^{2j-3} \left(W_{j}(0) + \frac{1}{\ell_{c}}\left(j + \frac 12\right) \int_{\Circle} \langle D_{s}^{2j} c_{t}, c_{t} \rangle \ud s\right) \\
    w(\theta) & = \sum_{j=0}^{n} (-1)^{j} \alpha_{j} \ell_{c}^{2j-3} \left( W_{j}(\theta) - W_{j}(0) \right)\,,
  \end{align*}
  and therefore
  \begin{align*}
    w(\theta) + w_{0} = \sum_{j=0}^{n} (-1)^{j} \alpha_{j} \ell_{c}^{2j-3} \left(W_{j}(\theta) + \frac{1}{\ell_{c}}\left(j + \frac 12\right) \int_{\Circle} \langle D_{s}^{2j} c_{t}, c_{t} \rangle \ud s\right)\,.
  \end{align*}
\end{expl}

\section{Smoothness of the metric}
\label{sec:smoothness-metric}

For any $q > \frac32$ we can consider the Sobolev completion $\mathcal{I}^{q} = \mathcal{I}^{q}(\Circle,\RR^d)$, which is an open set of the Hilbert vector space $\HRd{q}$. It is the aim of this section to show that the metric $G_{c}$ extends to a smooth weak metric on $\mathcal{I}^{q}$, for high enough $q$. It turns out that the smoothness of the metric reduces to the smoothness of the mapping
\begin{equation*}
  c \mapsto A_{c} = R_{\psi_{c}} \circ A(\ell_{c}) \circ R_{\psi_{c}^{-1}} .
\end{equation*}
We will begin our investigations with this question.

It is well-known (see for instance~\cite[Appendix A]{EM1970}) that, given a \emph{differential operator} $A$ of order $r$ with \emph{smooth coefficients} and a diffeomorphism $\psi$, the conjugate operator $A_\psi = R_\psi \circ A \circ R_{\psi^{-1}}$ is again a differential operator of order $r$, whose coefficients are polynomial expressions in $\psi$, its derivatives and $1/\psi'$; in particular, the mapping
\begin{equation*}
  \psi \mapsto A_{\psi} = R_{\psi} \circ A \circ R_{\psi^{-1}} , \quad \D{q}(\Circle) \to \mathcal{L}(\HRd{q},\HRd{q-r})
\end{equation*}
is smooth for $q > 3/2$ and $q \ge r \ge 1$, where $\D{q}(\Circle)$ is group of $H^{q}$-diffeomorphisms of $\Circle$. In~\cite{EK2014} Escher and Kolev extended this result to \emph{Fourier multipliers} by showing that the map $\psi \mapsto A_\psi$ remains smooth when $A$ is a Fourier multiplier of class $\mathcal{S}^{r}(\ZZ)$ (to be defined below).

The proof that the mapping $c\mapsto A_{c}$ is smooth that we present here is inspired by~\cite{EK2014,BEK2015}, but in the present case we have to deal with Fourier multipliers that depend (in a nice way) on a parameter $\lambda$.

First we introduce the classes $\mathcal{S}^{r}(\ZZ)$ and $\mathcal{S}^{r}_\lambda(\ZZ)$ of Fourier multipliers (for further details see~\cite{RT2010} for instance). A Fourier multiplier $\mathbf a(D)$ acts on a function $u$ via
\begin{equation}
  \mathbf a(D).u(\theta) = \sum_{m \in \ZZ} \mathbf a(m). \hat u(m) \exp(im\theta)\,,
\end{equation}
and its symbol is a function $\mathbf a : \ZZ \to \mathcal{L}(\CC^{d})$.

\begin{defn}
  Given $r \in \RR$, the Fourier multiplier $\mathbf{a}(D)$ belongs to the class $\mathcal{S}^{r}(\ZZ)$, if $\mathbf{a}(m)$ satisfies
  \begin{equation*}
    \norm{\Delta^{\alpha} \mathbf{a}(m)} \leq C_{\alpha} \langle m \rangle^{r-\alpha} ,
  \end{equation*}
  for each $\alpha \in \NN$, where $\langle m \rangle := (1+\abs{m}^{2})^{1/2}$.
\end{defn}

Here we define the difference operator $\Delta^{\alpha}$ via
\begin{equation*}
  \Delta \mathbf{a}(m) = \mathbf{a}(m+1) - \mathbf{a}(m),\qquad \Delta^{\alpha} = \Delta \circ \Delta^{\alpha-1}\,.
\end{equation*}
We equip the space $\mathcal{S}^{r}(\ZZ)$ with the topology induced by the seminorms
\begin{equation*}
  p_{\alpha}(\mathbf{a}(D)) = \sup_{m \in \ZZ} \norm{\Delta^{\alpha} \mathbf{a}(m)} \langle m \rangle^{-(r-\alpha)}\,,
\end{equation*}
where $\|\cdot\|$ is some norm on $\mathcal L(\CC^d)$. With this topology $\mathcal{S}^{r}(\ZZ)$ is a Fr\'{e}chet space. The class $\mathcal S^r(\ZZ)$ coincides with the one defined in~\cite{EK2014}; the equivalence is shown in~\cite{RT2010}.

\begin{expl}
  Any linear differential operator of order $r$ with constant coefficients belongs to $\mathcal{S}^{r}(\ZZ)$. Furthermore the operator $\Lambda^{2r} = \op{\langle m \rangle^{2r}}$, which defines the $H^{r}$-norm via
  \begin{equation*}
    \norm{u}_{H^r} = \int_{\Circle} \Lambda^{2r} u \cdot u \ud \theta,
  \end{equation*}
  belongs to $\mathcal{S}^{2r}(\ZZ)$.
\end{expl}

\begin{rem}
  A Fourier multiplier $\mathbf{a}(D)$ of class $\mathcal{S}^{r}(\ZZ)$ extends for any $q \in \RR$ to a \emph{bounded linear operator}
  \begin{equation*}
    \mathbf a(D): H^{q}(\Circle,\RR^{d}) \to H^{q-r}(\Circle,\RR^{d})\,,
  \end{equation*}
  and the linear embedding
  \begin{equation*}
    \mathcal{S}^{r}(\ZZ) \to \mathcal{L}(H^{q}(\Circle,\RR^d), H^{q-r}(\Circle,\RR^d))
  \end{equation*}
  is continuous.
\end{rem}

Now we introduce the class of parameter-dependent symbols.

\begin{defn}
  Given $r \in \RR$, the class $\mathcal{S}^{r}_{\lambda}(\ZZ) = C^\infty(\RR^+, \mathcal{S}^{r}(\ZZ))$ consists of smooth curves $\RR^+ \to \mathcal{S}^{r}(\ZZ)$, $\lambda \mapsto \mathbf{a}(\lambda, D)$ of Fourier multipliers. We will call such a curve a \emph{$\lambda$-symbol}.
\end{defn}

We will often write $\mathbf a_\lambda(D)$ for $\mathbf a(\lambda, D)$ or, by slight abuse of notation, to denote the whole curve $\lambda \mapsto \mathbf a(\lambda, D)$. Using the material in Appendix~\ref{sec:smooth-curves}, we can give an alternative description of the space $\mathcal{S}^{r}_{\lambda}(\ZZ)$ of one-parameter families of symbols.

\begin{lem}
  Given $r \in \RR$, a family of smooth curves $\mathbf{a}(\cdot, m) \in C^\infty(\RR^+, \mathcal{L}(\CC^d))$ with $m \in \ZZ$ defines an element $\mathbf{a}_\lambda(D)$ in the class $\mathcal{S}^{r}_{\lambda}(\ZZ)$ if and only if for each $\alpha, \beta \in \NN$,
  \begin{equation}\label{eq:symbol-la-est}
    \norm{\partial^\beta_{\lambda} \Delta^{\alpha} \mathbf{a}(\lambda, m)} \leq C_{\alpha, \beta} \langle m\rangle ^{r-\alpha}\,,
  \end{equation}
  holds locally uniformly in $\lambda\in\RR^+$.
\end{lem}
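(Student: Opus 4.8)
The plan is to reduce the statement to a characterization of smooth curves into a Fréchet space that is available from the appendix. By definition $\mathcal{S}^{r}_{\lambda}(\ZZ) = C^\infty(\RR^+, \mathcal{S}^{r}(\ZZ))$, and $\mathcal{S}^{r}(\ZZ)$ is a Fréchet space whose topology is given by the seminorms $p_{\alpha}(\mathbf{a}(D)) = \sup_{m} \norm{\Delta^{\alpha}\mathbf{a}(m)}\langle m\rangle^{-(r-\alpha)}$. I would first argue that a curve $\lambda \mapsto \mathbf{a}(\lambda,D)$ with values in $\mathcal{S}^{r}(\ZZ)$ is smooth if and only if it is smooth as a curve into a larger, more convenient space — here the space of all bounded sequences indexed by $m$ with values in $\mathcal{L}(\CC^d)$, weighted by $\langle m\rangle^{-(r-\alpha)}$ for each $\alpha$ — together with the requirement that all its derivatives land in $\mathcal{S}^{r}(\ZZ)$. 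This is exactly the kind of statement encoded by the material in Appendix~\ref{sec:smooth-curves} on smooth curves into Fréchet spaces: a curve is smooth iff it is differentiable to all orders and every derivative is continuous, and by the bornological/Mackey-type characterization of smoothness one may test smoothness on the individual seminorms.

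The forward direction is then routine: if $\mathbf{a}_\lambda(D)$ is smooth into $\mathcal{S}^r(\ZZ)$, then for each $\beta$ the derivative $\partial_\lambda^\beta \mathbf{a}_\lambda(D)$ is a continuous curve into $\mathcal{S}^r(\ZZ)$, hence locally bounded in each seminorm $p_\alpha$; since $p_\alpha(\partial_\lambda^\beta \mathbf{a}_\lambda(D)) = \sup_m \norm{\partial_\lambda^\beta \Delta^\alpha \mathbf{a}(\lambda,m)}\langle m\rangle^{-(r-\alpha)}$ (differentiation in $\lambda$ commutes with the finite-difference operator $\Delta^\alpha$ in $m$ and with the evaluation $\mathbf{a}(D)\mapsto \mathbf{a}(m)$), local boundedness of this quantity is precisely the estimate~\eqref{eq:symbol-la-est}. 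For the converse one assumes~\eqref{eq:symbol-la-est} for all $\alpha,\beta$ and must produce a smooth $\mathcal{S}^r(\ZZ)$-valued curve from the pointwise-in-$m$ smooth curves $\mathbf{a}(\cdot,m)$. First, for fixed $\lambda$ the $\alpha=$ varying estimates show $\mathbf{a}(\lambda,D)\in\mathcal{S}^r(\ZZ)$. Next, the candidate derivative $\lambda\mapsto \partial_\lambda\mathbf{a}(\lambda,D)$ (defined m-wise) lies in $\mathcal{S}^r(\ZZ)$ by the $\beta=1$ estimates, and one checks that the difference quotients $\tfrac1h(\mathbf{a}(\lambda+h,D)-\mathbf{a}(\lambda,D))$ converge to it in every seminorm $p_\alpha$: write the numerator via the integral form of Taylor's theorem, $\tfrac1h(\mathbf{a}(\lambda+h,m)-\mathbf{a}(\lambda,m)) - \partial_\lambda\mathbf{a}(\lambda,m) = \int_0^1 (\partial_\lambda\mathbf{a}(\lambda+th,m) - \partial_\lambda\mathbf{a}(\lambda,m))\,t\,\mathrm{d}t$ (or the analogous first-order remainder), apply $\Delta^\alpha$, weight by $\langle m\rangle^{-(r-\alpha)}$, and use the uniform-in-$m$ bound on $\partial_\lambda^2\Delta^\alpha\mathbf{a}$ from~\eqref{eq:symbol-la-est} with $\beta=2$ to get an $O(h)$ bound uniform in $m$; hence $p_\alpha$ of the error tends to $0$. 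Iterating this argument (the $n$-th derivative is the $m$-wise $n$-th derivative, and its difference quotients converge in $\mathcal{S}^r(\ZZ)$ because~\eqref{eq:symbol-la-est} with $\beta=n+1$ controls the remainder) shows the curve is $C^\infty$ into $\mathcal{S}^r(\ZZ)$, i.e.\ lies in $\mathcal{S}^r_\lambda(\ZZ)$.

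The main obstacle is the converse direction, and specifically the interchange of the $\lambda$-limit (difference quotient) with the supremum over $m\in\ZZ$ that defines the Fréchet seminorms: pointwise-in-$m$ smoothness does not by itself give smoothness into $\mathcal{S}^r(\ZZ)$, and one genuinely needs the \emph{locally uniform in $\lambda$} and \emph{uniform in $m$ after weighting} character of~\eqref{eq:symbol-la-est} — with one extra order in $\beta$ than the order of the derivative being established — to force the difference quotients to converge in seminorm rather than merely pointwise. This is where the hypothesis "holds locally uniformly in $\lambda$" is used in an essential way, and it is worth stating this interchange as the crux of the proof rather than hiding it; everything else (commuting $\partial_\lambda$ with $\Delta^\alpha$, the equivalence of the present seminorms with those of~\cite{EK2014} via~\cite{RT2010}) is bookkeeping. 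Alternatively, one can phrase the whole argument through the appendix's criterion that a curve into a Fréchet space is smooth iff it is "weakly" smooth in a suitable sense together with the derivatives staying in the space, which packages the interchange step once and for all; I would present the hands-on Taylor-remainder version above since it makes the role of~\eqref{eq:symbol-la-est} transparent.
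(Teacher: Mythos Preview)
Your argument is correct, and the forward direction matches the paper exactly. For the converse the paper takes a different route: rather than the hands-on Taylor-remainder estimate you carry out, it invokes the convenient-calculus criterion of Lemma~\ref{lem:smooth-curves} (from \cite{FK1988}), which says that a curve $c$ into a convenient vector space $E$ is smooth iff there exist locally bounded curves $c^{k}:\RR\to E$ with $\ell\circ c$ smooth and $(\ell\circ c)^{(k)}=\ell\circ c^{k}$ for every $\ell$ in a suitable point-separating set $\mathcal{V}\subset E'$. The companion Lemma~\ref{lem:V-bornology} checks that the evaluations $\mathbf{a}(D)\mapsto\mathbf{a}(m)$ (composed with linear functionals on $\mathcal{L}(\CC^{d})$) form such a $\mathcal{V}$ for $E=\mathcal{S}^{r}(\ZZ)$. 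One then sets $c^{\beta}(\lambda)=\partial_{\lambda}^{\beta}\mathbf{a}(\lambda,D)$, notes these are locally bounded in $\mathcal{S}^{r}(\ZZ)$ by~\eqref{eq:symbol-la-est}, and observes that the pointwise-in-$m$ smoothness of $\mathbf{a}(\cdot,m)$ gives exactly the scalar smoothness of $\ell\circ c$ with the right derivatives. The trade-off is clear: the paper absorbs the interchange of $\lambda$-limit with $\sup_{m}$ --- which you rightly flag as the crux --- into an abstract black box, at the price of having to verify the bornology condition of Lemma~\ref{lem:V-bornology}; your direct argument is elementary and self-contained, and makes transparent that each step consumes the estimate with $\beta$ one larger than the derivative order being established. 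One small slip: in your first-order remainder formula the integrand should not carry the factor $t$ (it is simply $\int_{0}^{1}\big(\partial_{\lambda}\mathbf{a}(\lambda+th,m)-\partial_{\lambda}\mathbf{a}(\lambda,m)\big)\,\mathrm{d}t$, and the $O(h)$ then comes from a second application of the mean-value inequality using the $\beta=2$ bound), but this is cosmetic and does not affect the argument.
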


\begin{proof}
  Let $\mathbf{a}_{\lambda}(D) \in \mathcal{S}^{r}_{\lambda}(\ZZ)$. Since all derivatives of a smooth curve are locally bounded, it follows that for all $\alpha, \beta \in \NN$, the expression $p_{\alpha}\left(\partial_{\lambda}^\beta \mathbf{a}_{\lambda}(D)\right)$ is bounded locally uniformly in $\lambda$ and hence $\mathbf{a}(\lambda, m)$ satisfies~\eqref{eq:symbol-la-est} for some constants $C_{\alpha, \beta}$.

  Conversely, let a family $\mathbf{a}(\lambda, m)$ of smooth curves satisfying the above estimates be given. Define the curves $\mathbf{a}^\beta(\lambda, m) = \partial_{\lambda}^\beta \mathbf{a}(\lambda, m)$. The estimate~\eqref{eq:symbol-la-est} shows that the curves $\lambda \mapsto a^\beta(\lambda, \cdot)$ are locally bounded in $\mathcal{S}^{r}(\ZZ)$ and hence by Lemma~\ref{lem:smooth-curves} the curve $\lambda \mapsto \mathbf{a}_{\lambda}(D)$ is an element of $\mathcal{S}^{r}_{\lambda}(\ZZ)$.
\end{proof}

The following is the main theorem that will be used to show the smoothness of the metric. Set
\begin{equation*}
  A_{c} = R_{\psi_{c}} \circ A(\ell_{c}) \circ R_{\psi_{c}^{-1}}\,,
\end{equation*}
with $\psi_{c}(\theta) = \frac{2\pi}{\ell_{c}}\int_{0}^\theta \abs{c'} \ud \sigma$.

\begin{prop}\label{prop:smoothness-Ac}
  Let $r \geq 1$ and $A(\lambda) = \mathbf{a}_\lambda(D)$ belong to the class $\mathcal{S}^{r}_{\lambda}(\ZZ)$.
  Then the map
  \begin{align*}
    c & \mapsto A_{c}\,, & \mathcal{I}^{q}(\Circle,\RR^d) & \to \mathcal{L}(H^{q}(\Circle,\RR^d), H^{q-r}(\Circle,\RR^d))\,,
  \end{align*}
  is smooth provided $q > \frac 32$ and $q \geq r$.
\end{prop}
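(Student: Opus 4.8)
The plan is to decompose the map $c \mapsto A_c = R_{\psi_c}\circ A(\ell_c)\circ R_{\psi_c^{-1}}$ into three pieces and show each is smooth into the appropriate space of operators. First I would observe that $c \mapsto \ell_c = \int_{\Circle}|c'|\,\ud\theta$ and $c\mapsto \psi_c$ are smooth maps $\mathcal{I}^q(\Circle,\RR^d)\to\RR^+$ and $\mathcal{I}^q(\Circle,\RR^d)\to\D{q}(\Circle)$ respectively; this follows because $|c'| = \langle c',c'\rangle^{1/2}$ is a smooth function of $c'\in H^{q-1}$ (using $q-1 > 1/2$ so that $H^{q-1}$ is a Banach algebra and stays away from zero on the open set of immersions), and $\psi_c$ is obtained from $|c'|$ by integration and division by $\ell_c$. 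Next, since $A(\lambda) = \mathbf a_\lambda(D)\in \mathcal S^r_\lambda(\ZZ) = C^\infty(\RR^+,\mathcal S^r(\ZZ))$ by hypothesis, and since the embedding $\mathcal S^r(\ZZ)\hookrightarrow \mathcal L(H^q,H^{q-r})$ is continuous and linear (hence smooth), the composite $\lambda\mapsto A(\lambda)\in\mathcal L(H^q,H^{q-r})$ is smooth; precomposing with the smooth map $c\mapsto\ell_c$ gives that $c\mapsto A(\ell_c)$ is smooth into $\mathcal L(H^q,H^{q-r})$.

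The remaining and genuinely delicate point is to handle the conjugation by the reparametrization. The key input is the result of Escher and Kolev \cite{EK2014}, extending the classical fact for differential operators from \cite[Appendix A]{EM1970}: for a fixed Fourier multiplier $A$ of class $\mathcal S^r(\ZZ)$ with $r\geq 1$, the map $\psi\mapsto A_\psi = R_\psi\circ A\circ R_{\psi^{-1}}$ is smooth $\D{q}(\Circle)\to \mathcal L(H^q,H^{q-r})$ for $q>3/2$, $q\geq r$. What I need here is a version of this statement that is \emph{jointly} smooth in the pair $(\psi,\lambda)$, i.e.\ that the map
\begin{equation*}
  (\psi,\lambda)\mapsto R_\psi\circ \mathbf a_\lambda(D)\circ R_{\psi^{-1}},\qquad \D{q}(\Circle)\times\RR^+\to\mathcal L(H^q,H^{q-r}),
\end{equation*}
is smooth. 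The cleanest route is to revisit the proof in \cite{EK2014}: one writes the conjugated operator's action via its Schwartz kernel, or via the symbol calculus expansion, in terms of $\psi$, $1/\psi'$ and the symbol $\mathbf a(m)$; all the estimates there are uniform over bounded families of symbols in the Fr\'echet topology of $\mathcal S^r(\ZZ)$, so replacing the fixed symbol $\mathbf a$ by the smooth curve $\lambda\mapsto \mathbf a_\lambda$ and invoking the characterization in the Lemma above (estimates \eqref{eq:symbol-la-est} holding locally uniformly in $\lambda$) together with the exponential-law / Cartesian-closedness machinery for smooth maps into convenient vector spaces recalled in Appendix~\ref{sec:smooth-curves} upgrades pointwise smoothness in $\psi$ to joint smoothness in $(\psi,\lambda)$. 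This is the main obstacle: one must verify that the estimates controlling $\psi\mapsto A_\psi$ depend on the symbol only through finitely many of the seminorms $p_\alpha$, so that differentiating in $\lambda$ under the sum/kernel is legitimate.

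Granting joint smoothness of $(\psi,\lambda)\mapsto R_\psi\circ\mathbf a_\lambda(D)\circ R_{\psi^{-1}}$, the proof concludes by composition: the map
\begin{equation*}
  c\mapsto (\psi_c,\ell_c)\mapsto R_{\psi_c}\circ\mathbf a_{\ell_c}(D)\circ R_{\psi_c^{-1}} = A_c
\end{equation*}
is a composition of smooth maps $\mathcal{I}^q(\Circle,\RR^d)\to\D{q}(\Circle)\times\RR^+\to\mathcal L(H^q,H^{q-r})$, hence smooth, which is exactly the assertion of Proposition~\ref{prop:smoothness-Ac}. Two technical caveats worth noting inside the write-up: the hypothesis $q>3/2$ is used both to make $\D{q}(\Circle)$ a topological group acting smoothly by composition and to keep $|c'|$ bounded away from zero via the Sobolev embedding $H^{q-1}\hookrightarrow C^0$; and the hypothesis $r\geq 1$ is needed precisely so that the classical/Escher--Kolev conjugation result applies (for $r<1$ the loss of derivatives in the commutator terms is not controlled in the same way).
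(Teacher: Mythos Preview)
Your overall architecture---factor $c\mapsto A_c$ through $c\mapsto(\psi_c,\ell_c)$ and establish joint smoothness of the conjugation---matches the paper's, and your treatment of $c\mapsto\ell_c$, $c\mapsto\psi_c$ is fine. The difference lies in how the joint smoothness of $(\psi,\lambda)\mapsto R_\psi\circ\mathbf a_\lambda(D)\circ R_{\psi^{-1}}$ is obtained. You propose to reopen the Escher--Kolev argument and check by hand that all estimates on $\psi\mapsto A_\psi$ depend on the symbol only through finitely many seminorms $p_\alpha$, so that one can differentiate under the parameter $\lambda$. This would work, but it is laborious and requires re-deriving the content of \cite{EK2014}.

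The paper avoids this entirely by exploiting a fact you do not mention: for each fixed $\psi$, the map $A\mapsto A_\psi$ is \emph{linear} in $A$. Given the Escher--Kolev result that $\psi\mapsto A_\psi$ is smooth $\D{q}\to\mathcal L(H^q,H^{q-r})$ for each fixed $A\in\mathcal S^r(\ZZ)$, the uniform boundedness principle of convenient calculus \cite[5.18]{KM1997} upgrades this immediately to smoothness of $\psi\mapsto(A\mapsto A_\psi)$ as a map $\D{q}\to\mathcal L(\mathcal S^r(\ZZ),\mathcal L(H^q,H^{q-r}))$; the exponential law then gives joint smoothness of $(A,\psi)\mapsto A_\psi$ on $\mathcal S^r(\ZZ)\times\D{q}$. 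Composing with the smooth map $c\mapsto A(\ell_c)\in\mathcal S^r(\ZZ)$ (note: into the symbol class, not merely into $\mathcal L(H^q,H^{q-r})$ as you wrote) finishes the proof. So the paper's route uses \cite{EK2014} purely as a black box and replaces your proposed uniformity verification with a one-line appeal to linearity plus uniform boundedness; your route would also succeed but at the cost of redoing the symbol estimates.
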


\begin{proof}
  It was established in~\cite[Thm.~3.7]{EK2014} that for $A \in \mathcal{S}^{r}(\ZZ)$ the mapping
  \begin{equation*}
    \psi \mapsto A_\psi := R_{\psi} \circ A \circ R_{\psi^{-1}} \,, \quad \D{q} \to \mathcal{L}(H^q, H^{q-r})
  \end{equation*}
  is smooth. Using the uniform boundedness principle~\cite[5.18]{KM1997} it follows that the map
  \begin{equation*}
    \psi \mapsto (A \mapsto A_\psi)\,, \quad \D{q} \to \mathcal L(\mathcal S^r(\ZZ), \mathcal{L}(H^q, H^{q-r}))
  \end{equation*}
  is smooth\footnote{When $E,F$ are Fr\'echet spaces or more generally convenient vector spaces, $\mathcal L(E,F)$ is the space of bounded linear maps equipped with the topology of uniform convergence on bounded sets; see~\cite[5.3]{KM1997} for details.}. Because the inclusion
  \begin{equation*}
    \mathcal L(\mathcal S^r(\ZZ), \mathcal L(H^q,H^{q-r})) \subset C^\infty(\mathcal S^r(\ZZ), \mathcal L(H^q,H^{q-r})\,,
  \end{equation*}
  is bounded~\cite[5.3]{KM1997}, the following map is smooth
  \begin{equation*}
    \psi \mapsto (A \mapsto A_\psi)\,, \quad \D{q} \to C^\infty(\mathcal S^r(\ZZ), \mathcal{L}(H^q, H^{q-r}))\,;
  \end{equation*}
  via the exponential law~\cite[3.12]{KM1997} this is equivalent to the smoothness of the joint map
  \begin{equation*}
    (A, \psi) \mapsto A_\psi\,, \quad \mathcal S^r(\ZZ) \times \D{q} \to \mathcal{L}(H^q, H^{q-r}))\,.
  \end{equation*}
  Next we note that the maps
  \begin{equation*}
    c \mapsto \ell_{c}\,, \quad \mathcal{I}^{q} \to \RR^+ \qquad\text{and}\qquad
    c \mapsto \psi_{c}\,, \quad \mathcal{I}^{q} \to \D{q}
  \end{equation*}
  are smooth -- this can be seen from their definitions -- and since $\mathcal S^r_\lambda(\ZZ) = C^\infty(\RR^+, \mathcal S^r(\ZZ))$ the composition
  \begin{equation*}
    c \mapsto A(\ell_{c})\,, \quad \mathcal I^q \to \mathcal S^r(\ZZ)
  \end{equation*}
  is smooth as well. To conclude the proof we note that $c \mapsto A_{c}$ can be written as the composition $c \mapsto (A(\ell_{c}), \psi_{c}) \mapsto A(\ell_{c})_{\psi_{c}}$.
\end{proof}

\begin{cor}\label{cor:smooth-G-extension}
  Let $r \geq \frac 12$ and $A(\lambda) = \mathbf{a}_\lambda(D)$ belong to the class $\mathcal{S}^{2r}_{\lambda}(\ZZ)$.
  Then the bilinear form
  \begin{equation*}
    G_{c}(h,k) = \int_{\Circle} \left\langle A_{c} h, k \right\rangle \!\ud s
  \end{equation*}
  extends smoothly to $\mathcal{I}^{q}(\Circle,\RR^d)$ provided $q > \frac 32$ and $q \geq 2r$.
\end{cor}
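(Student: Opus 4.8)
The plan is to reduce the smoothness of the bilinear form $G_c$ to the already-established smoothness of $c \mapsto A_c$ from Proposition~\ref{prop:smoothness-Ac}, together with the smoothness of integration against arc length. First I would observe that since $A(\lambda) = \mathbf a_\lambda(D) \in \mathcal S^{2r}_\lambda(\ZZ)$, Proposition~\ref{prop:smoothness-Ac} applies with the order $r$ there replaced by $2r$; the hypotheses $q > \tfrac32$ and $q \geq 2r$ are exactly what is needed, so the map
\begin{equation*}
  c \mapsto A_c\,, \quad \mathcal I^q(\Circle,\RR^d) \to \mathcal L(H^q(\Circle,\RR^d), H^{q-2r}(\Circle,\RR^d))
\end{equation*}
is smooth. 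Then I would write $G_c(h,k) = \int_{\Circle} \langle A_c h, k\rangle \abs{c'}\ud\theta$ and decompose it as a composition of three maps: the map $c \mapsto (A_c, \abs{c'})$, the bilinear evaluation/multiplication $(A_c, h, k, \abs{c'}) \mapsto \langle A_c h, k\rangle\abs{c'}$, and the integral $\int_{\Circle}(\cdot)\ud\theta$.

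Next I would check each factor. The map $c \mapsto \abs{c'}$, $\mathcal I^q \to H^{q-1}(\Circle,\RR)$ is smooth (it is the composition of $c\mapsto c'$, which is bounded linear, with $v \mapsto \abs{v} = \sqrt{\langle v,v\rangle}$, which is smooth on the open set where $v$ is nowhere zero, using that $H^{q-1}$ is a multiplicative algebra for $q-1 > \tfrac12$, i.e. $q > \tfrac32$). The bilinear map $\mathcal L(H^q, H^{q-2r}) \times H^q \to H^{q-2r}$, $(B,h)\mapsto Bh$ is bounded bilinear, hence smooth. For the pairing $\langle A_c h, k\rangle$ with the extra factor $\abs{c'}$: I need that pointwise multiplication $H^{q-2r} \times H^q \times H^{q-1} \to H^{?}$ lands in a space on which integration over $\Circle$ makes sense and is bounded. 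Since $q - 2r$ may be negative, the cleanest route is to pair $A_c h \in H^{q-2r}$ against $k \abs{c'} \in H^{\min(q,q-1)} = H^{q-1}$ (using again that $H^{q-1}$ is an algebra and $H^q \hookrightarrow H^{q-1}$ continuously), and to note that $\int_{\Circle}\langle\varphi,\chi\rangle\ud\theta$ extends to the duality pairing $H^{q-2r} \times H^{-(q-2r)} \to \RR$, which is bounded bilinear; this requires $q - 1 \geq -(q-2r)$, i.e. $2q \geq 2r + 1$, i.e. $q \geq r + \tfrac12$. Under the standing hypotheses $q \geq 2r$ and $r \geq \tfrac12$ this holds, since $q \geq 2r = r + r \geq r + \tfrac12$.

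Assembling these, $G_c(h,k)$ is a composition and finite combination of smooth maps and a bounded multilinear map, hence smooth as a map $\mathcal I^q \to L^2_{\mathrm{sym}}(H^q(\Circle,\RR^d);\RR)$ (equivalently, $(c,h,k)\mapsto G_c(h,k)$ is smooth on $\mathcal I^q \times H^q \times H^q$, and it visibly restricts to the original bilinear form on $\Imm(\Circle,\RR^d) \times C^\infty \times C^\infty$ by density and continuity). I expect the only genuinely delicate point to be the bookkeeping of Sobolev indices in the last paragraph — verifying that the duality pairing closes up, i.e. that $q - 1 \geq 2r - q$, and being careful that when $q - 2r < 0$ one must genuinely use the $H^{q-2r}$–$H^{2r-q}$ duality rather than naively multiplying functions. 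Everything else is a routine invocation of: $H^{s}$ is a multiplicative algebra for $s > \tfrac12$, bounded multilinear maps are smooth (Cartesian closedness, \cite{KM1997}), and Proposition~\ref{prop:smoothness-Ac}.
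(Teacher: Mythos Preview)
Your argument is correct and follows essentially the same route as the paper: invoke Proposition~\ref{prop:smoothness-Ac} (with order $2r$) for the smoothness of $c\mapsto A_c$, combine it with the smoothness of $c\mapsto \abs{c'}$, and conclude by bounded multilinearity. The only cosmetic difference is that the paper packages the arc-length weight as the multiplication operator $M_{\abs{c'}}$ composed on the left of $A_c$ and shows $c\mapsto \check G_c = M_{\abs{c'}}\circ A_c$ is smooth into $\mathcal L(H^q,H^{q-2r})$, whereas you group $\abs{c'}$ with $k$ and invoke the $H^{q-2r}$--$H^{2r-q}$ duality; since $q\geq 2r$ forces $q-2r\geq 0$, your duality concern never actually arises and both reductions coincide.
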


\begin{proof}
  It is enough to show that
  \begin{equation*}
    c \mapsto  \check{G}_{c} = M_{\abs{c^{\prime}}}\circ A_{c} , \quad \mathcal{I}^{q} \to \mathcal{L}(H^q, H^{q-r}),
  \end{equation*}
  is smooth, where $M_{\abs{c^{\prime}}}$ denotes pointwise multiplication by $\abs{c^{\prime}}$. Now
  \begin{equation*}
    c \mapsto M_{\abs{c^{\prime}}} , \quad \mathcal{I}^{q} \to \mathcal{L}(H^\rho, H^\rho),
  \end{equation*}
  is smooth for $0 \leq \rho \leq q-1$ and the conclusion follows by Proposition~\ref{prop:smoothness-Ac} and the fact that composition of continuous linear mappings between Banach spaces is smooth.
\end{proof}

\section{Smoothness of the spray}
\label{sec:smoothness-spray}

In order to prove the existence and smoothness of the spray, we will require moreover an \emph{ellipticity condition} on $\lambda$-symbols. For the purpose of this article, we will adopt the following definition.

\begin{defn}
  An element $\mathbf{a}_\lambda(D)$ of the class $\mathcal{S}^{r}_{\lambda}(\ZZ)$ is called \emph{locally uniformly elliptic}, if $\mathbf{a}(\lambda, m) \in GL(\CC^d)$ for all $(\lambda, m) \in \RR^+ \times \ZZ$ and
  \begin{equation*}
    \norm{\mathbf{a}(\lambda, m)^{-1}} \leq C \langle m \rangle^{-r}
  \end{equation*}
  holds locally uniformly in $\lambda$.
\end{defn}

\begin{rem}
  A Fourier multiplier $\mathbf a(D)$ of class $\mathcal{S}^{r}(\ZZ)$ is \emph{elliptic}, if $\mathbf a(m) \in GL(\CC^d)$ for all $m \in \ZZ$ and
  \begin{equation*}
    \norm{\mathbf{a}(m)^{-1}} \leq C \langle m \rangle^{-r}\,,
  \end{equation*}
  holds for all $m$. Such an $\mathbf a(D)$ induces a \emph{bounded isomorphism} between $H^{q}(\Circle,\RR^{d})$ and  $H^{q-r}(\Circle,\RR^{d})$, for all $q\in \RR$.
\end{rem}

We summarize our considerations by introducing the following class of operators which will be denoted $\mathcal{E}^{r}_{\lambda}(\ZZ)$.

\begin{defn}
  A family $\mathbf{a}_{\lambda}(D)$ of Fourier multipliers is an element of the class $\mathcal{E}^{r}_{\lambda}(\ZZ)$, if
  \begin{enumerate}
    \item
          $\mathbf{a}_{\lambda}(D)$ is in the class $\mathcal{S}^{r}_{\lambda}(\ZZ)$,
    \item
          $\mathbf{a}(\lambda, m)$ is a positive Hermitian matrix for all $(\lambda, m) \in \RR^+ \times \ZZ$ and
    \item
          $\mathbf{a}_{\lambda}(D)$ is locally uniformly elliptic.
  \end{enumerate}
\end{defn}

\begin{thm}
  Let $r \geq 1$, $q\geq 2r$ and $A(\lambda) = \mathbf a_\lambda(D)$ belong to $\mathcal{E}^{2r}_{\lambda}(\ZZ)$. Then
  \begin{equation*}
    G_{c}(h,k) = \int_{\Circle} \left\langle A_{c} h, k \right\rangle \!\ud s
  \end{equation*}
  defines a smooth weak Riemannian metric of order $r$ on $\mathcal{I}^{q}(\Circle, \RR^d)$ with a smooth geodesic spray.
\end{thm}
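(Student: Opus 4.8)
The statement has two parts: (i) $G$ is a smooth weak Riemannian metric of order $r$ on $\mathcal{I}^q$, and (ii) it has a smooth geodesic spray. For part (i), smoothness of the bilinear form $c \mapsto G_c$ is already handled by Corollary~\ref{cor:smooth-G-extension}, since $A(\lambda) \in \mathcal{E}^{2r}_\lambda(\ZZ) \subseteq \mathcal{S}^{2r}_\lambda(\ZZ)$; so I only need to verify that $G_c$ is a weak metric, i.e.\ that for each fixed $c$ the bilinear form $G_c$ on $H^q(\Circle,\RR^d)$ is symmetric, positive definite, and continuous. Continuity follows because $A_c : H^q \to H^{q-r}$ is bounded and $H^q \hookrightarrow H^{r} = H^{q-r}$ when $q \geq 2r$, so the pairing $\int \langle A_c h, k\rangle\, ds$ makes sense and is bounded on $H^q \times H^q$. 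Symmetry and positivity reduce, via the conjugation $A_c = R_{\psi_c}\circ A(\ell_c)\circ R_{\psi_c^{-1}}$ and the change of variables defining $ds$, to the corresponding property for the Fourier multiplier $A(\ell_c)$ acting on $L^2(\Circle,\RR^d)$; and that in turn is exactly condition (2) in the definition of $\mathcal{E}^{2r}_\lambda(\ZZ)$, namely that $\mathbf{a}(\lambda,m)$ is positive Hermitian for every $m$ (note $\widehat{h'}(m) = im\,\hat h(m)$ and the symbol being positive Hermitian gives $\sum_m \langle \mathbf{a}(\ell_c,m)\hat h(m), \hat h(m)\rangle > 0$ for $h \neq 0$). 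The word ``weak'' is essential: $G_c$ induces only the $H^r$-topology on $H^q$, not the $H^q$-topology.

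**The spray.** For part (ii) the natural route is to invoke Theorem~\ref{thm:geodesic-equations}, which gives an explicit formula for the spray $S_c$ on the smooth manifold $\Imm(\Circle,\RR^d)$ under the hypothesis that $A(\lambda)$ is invertible with continuous inverse — and local uniform ellipticity in $\mathcal{E}^{2r}_\lambda(\ZZ)$ certainly supplies that, indeed it makes $A(\ell_c) : H^q \to H^{q-2r}$ a bounded isomorphism. So the task is to show that the map
\begin{equation*}
  (c,h) \mapsto S_c(h) = -A_c^{-1}\left\{ (D_{c,h}A_c)h + \langle D_s h, v\rangle A_c h + \langle A_c h, D_s h\rangle v + (w+w_0)D_s v\right\}
\end{equation*}
extends to a smooth map $\mathcal{I}^q \times H^q \to H^q$ (equivalently a smooth vector field on $T\mathcal{I}^q$). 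I would go term by term. The map $c \mapsto A_c$ is smooth into $\mathcal{L}(H^q, H^{q-r})$ by Proposition~\ref{prop:smoothness-Ac}; by the same proposition applied to the derivative $\lambda$-symbol $\mathbf{a}'_\lambda(D) \in \mathcal{S}^{2r}_\lambda(\ZZ)$ one handles $A_c'$, and the ``Leibniz'' term $(D_{c,h}A_c)h$ is bilinear in $h$ with coefficients smooth in $c$, landing in $H^{q-2r}$; the arc-length derivative $D_s$ loses one derivative; the scalar weights $w, w_0$ are smooth real-valued functions of $(c,h)$ because they are integrals of products of quantities already known to be smooth (here one uses $q \geq 2r$ so all the pairings $\langle A_c h, D_s h\rangle$, $\langle A_c' h, h\rangle$ etc.\ are well-defined and continuous). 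Hence the expression in braces is a smooth map into $H^{q-2r}$, and composing with the smooth inverse $c \mapsto A_c^{-1} \in \mathcal{L}(H^{q-2r}, H^q)$ — smoothness of inversion following from smoothness of $c\mapsto A_c$ together with ellipticity and the fact that inversion is smooth on the open set of invertible operators between fixed Banach spaces — yields a smooth map into $H^q$. That $S_c(h)$ restricts on $\Imm(\Circle,\RR^d)$ to the spray of Theorem~\ref{thm:geodesic-equations} is automatic since the formula is the same and $C^\infty$ is dense.

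**Main obstacle.** The delicate point is the bookkeeping of Sobolev orders showing that every term genuinely lands in $H^{q-2r}$ with smooth dependence, and in particular that $c \mapsto A_c^{-1}$ is smooth as a map $\mathcal{I}^q \to \mathcal{L}(H^{q-2r}, H^q)$ — one needs that $A_c$, a priori only known to be bounded $H^q \to H^{q-r}$ and $H^{q-r}\to H^{q-2r}$, is in fact a bounded isomorphism $H^q \to H^{q-2r}$ with inverse depending smoothly on $c$; this is where local uniform ellipticity (uniform control of $\|\mathbf{a}(\lambda,m)^{-1}\| \leq C\langle m\rangle^{-2r}$) rather than mere pointwise ellipticity is needed, so that the estimate is locally uniform in $\ell_c$ and survives the conjugation by $R_{\psi_c}$. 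Once the smoothness of $c \mapsto A_c^{-1}$ is in place (which I would extract from Proposition~\ref{prop:smoothness-Ac} applied to the $\lambda$-symbol $\mathbf{a}(\lambda,\cdot)^{-1} \in \mathcal{S}^{-2r}_\lambda(\ZZ)$, verifying that inversion preserves the symbol class under ellipticity — a standard fact for the classes $\mathcal{S}^r(\ZZ)$), the rest is routine composition of smooth maps between Banach spaces, and the constraint $q \geq 2r$ (together with $r\geq 1$, which guarantees enough derivatives to absorb the $D_s$'s and the pointwise multiplications in $D_{c,h}A_c$) is exactly what makes the count close.
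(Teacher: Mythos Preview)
Your approach is essentially the paper's: smoothness of $G$ from Corollary~\ref{cor:smooth-G-extension}, positivity from the Hermitian symbol via the change of variables $\theta \mapsto \psi_c(\theta)$, existence of the spray from Theorem~\ref{thm:geodesic-equations}, and then a term-by-term check that $S_c(h)$ extends smoothly to $\mathcal{I}^q \times H^q \to H^q$.

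One clarification on the inverse. In your ``Main obstacle'' paragraph you worry that $A_c$ is ``a priori only known to be bounded $H^q \to H^{q-r}$'' and propose to recover $c \mapsto A_c^{-1}$ by applying Proposition~\ref{prop:smoothness-Ac} to the inverse $\lambda$-symbol $\mathbf a(\lambda,\cdot)^{-1} \in \mathcal S^{-2r}_\lambda(\ZZ)$. Neither is needed, and the second route does not work as stated: Proposition~\ref{prop:smoothness-Ac} carries the hypothesis $r \ge 1$ on the order of the symbol, so it does not apply to a symbol of negative order $-2r$. The paper (and, in fact, your earlier sentence) takes the simpler path: Proposition~\ref{prop:smoothness-Ac} applied with order $2r$ gives directly that $c \mapsto A_c$ is smooth into $\mathcal L(H^q, H^{q-2r})$ for $q \ge 2r$; ellipticity of $A(\ell_c)$ together with the fact that $R_{\psi_c}$ is a bounded isomorphism of $H^q$ shows $A_c$ lands in the open set of invertibles; and then one composes with the smooth map $A \mapsto A^{-1}$ on $\mathcal L(H^q, H^{q-2r})$. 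No symbol calculus for the inverse is required.
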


\begin{proof}
  It is shown in Corollary~\ref{cor:smooth-G-extension} that $G$ extends smoothly to $\mathcal I^q(\Circle,\RR^d)$. We can write
  \begin{equation*}
    G_{c}(h,h) = 2\pi \sum_{m \in \ZZ} \langle \mathbf a(\ell_{c}, m) \hat h(m), \hat h(m) \rangle\,,
  \end{equation*}
  and because $\mathbf a(\ell_{c}, m)$ are positive Hermitian matrices, $G_{c}(h,h) = 0$ only for $h = 0$. Thus $G$ is a Riemannian metric. It is a weak Riemannian metric, because the inner product $G_{c}(\cdot,\cdot)$ induces on each tangent space $T_{c} \mathcal I^q(\Circle,\RR^d)$ the $H^r$-topology, while the manifold $\mathcal I^q(\Circle,\RR^d)$ itself carries the $H^q$-topology and by our assumptions $q > r$.

  It remains to show that the geodesic spray of $G$ exists and is smooth. For each $\lambda \in \RR^+$, the operator $A(\lambda)$ is an elliptic Fourier multiplier and thus induces a bi-bounded linear isomorphism on $C^\infty(\Circle,\RR^d)$. Thus we can apply Theorem~\ref{thm:geodesic-equations}, which shows that the geodesic spray exists on the space $\operatorname{Imm}(\Circle,\RR^d)$ of smooth immersions. We will show that the map $S_{c}(h)$ extends smoothly to the Sobolev completion $\mathcal I^q(\Circle,\RR^d)$. Then $F(c,h) = (h,S_{c}(h))$ is necessarily the geodesic spray of $G$ on $\mathcal I^q(\Circle,\RR^d)$.

  We have the following formula for $S_{c}(h)$,
  \begin{equation*}
    S_{c}(h) = -A_{c}^{-1}\left\{ (D_{c,h} A_{c}) h
    + \langle D_{s} h, v \rangle A_{c} h
    + \langle A_{c} h, D_{s} h \rangle v + (w + w_{0}) D_{s} v \right\}\,.
  \end{equation*}
  The map
  \begin{equation*}
    A \mapsto A^{-1}, \quad \mathcal U \subset \mathcal{L}(H^q, H^{q-2r}) \to \mathcal{L}(H^{q-2r}, H^q)\,,
  \end{equation*}
  defined on the open subset $\mathcal U$ of invertible operators is smooth and so is $c \mapsto A_{c}$ by Proposition~\ref{prop:smoothness-Ac}; therefore so is the composition
  $c \mapsto A_{c}^{-1} : \mathcal I^q \to \mathcal L(H^{q-2r},H^q)$. Thus we need to show that
  \begin{equation*}
    (c,h) \mapsto (D_{c,h} A_{c}) h
    + \langle D_{s} h, v \rangle A_{c} h
    + \langle A_{c} h, D_{s} h \rangle v + (w + w_{0}) D_{s} v\,,
  \end{equation*}
  is smooth between $\mathcal I^q \times H^q \to H^{q-2r}$. The first term $(D_{c,h} A_{c})h$ is the derivative of $(c,h) \mapsto A_{c}h$ with respect to the first variable and hence the map $(c,h) \mapsto (D_{c,h} A_{c})h$ is smooth between $\mathcal I^q \times H^q \to H^{q-2r}$. Arc-length derivation
  \begin{equation*}
    (c,u) \mapsto D_{s} u\,,\quad
    \mathcal I^q \times H^\rho \to H^{\rho-1}\,,\quad 0 \leq \rho \leq q\,,
  \end{equation*}
  is smooth and pointwise multiplication in $C^\infty(\Circle,\RR)$ extends to a continuous bilinear mapping
  \begin{equation*}
    H^\sigma \times H^\rho \to H^\rho\,,\quad \sigma > \frac 12 \,,\; 0 \leq \rho \leq \sigma\,.
  \end{equation*}
  Therefore, noting that $v = D_{s} c$ and $q - 1 > \frac 12$, the expression
  $\langle D_{s} h, v \rangle A_{c} h + \langle A_{c} h, D_{s} h \rangle v$ is a smooth map $\mathcal I^q \times H^q \to H^{q-2r}$. Next we use the fact that the antiderivative
  \begin{equation*}
    u \mapsto \left( \theta \mapsto \int_{0}^\theta u(\sigma) \ud \sigma \right)\,,\quad
    H^\rho \to H^{\rho+1}\,,\quad \rho \geq 0\,,
  \end{equation*}
  is a bounded linear mapping. Thus $(c,h) \mapsto w(c,h)$ maps smoothly $\mathcal I^q \times H^q \to H^{q-2r+1}$ and because $q-2r + 1 > \frac 12$ we have that $(c,h) \mapsto w(c,h) D_{s} v$ is smooth between $\mathcal I^q \times H^q \to H^{q-2r}$. The last term is $w_{0}(c,h)$. First we note that the map $c \mapsto \psi_{c}$ between $\mathcal I^q \to \mathcal D^q$ is smooth. A term-by-term inspection shows that the map $(c,h) \mapsto f(c,h)$, where $f(c,h)$ is the integrand in the definition of $w_{0}(c,h)$ is a smooth map $\mathcal I^q \times H^q \to L^1$ and thus $(c,h) \mapsto w_{0}(c,h)$ is smooth as well. Thus the extension of $G$ to $\mathcal I^q(\Circle,\RR^d)$ has a smooth spray.
\end{proof}

As a corollary, using the Cauchy--Lipschitz theorem, we obtain the local existence of geodesics on the Hilbert manifold $\mathcal{I}^{q}(\Circle,\RR^d)$.

\begin{cor}\label{cor:Hq-local-well-posedness}
  Let $A_{\lambda} = \mathbf{a}_{\lambda}(D)$ belong to the class $\mathcal{E}^{2r}_{\lambda}(\ZZ)$, where $r \geq 1$, and let $q \ge 2r$. Consider the geodesic flow on the tangent bundle $T\mathcal{I}^{q}$ induced by the Fourier multiplier $A_{\lambda}$. Then, given any $(c_{0},h_{0}) \in T\mathcal{I}^{q}$, there exists a unique non-extendable geodesic
  \begin{equation*}
    (c, h)\in C^\infty(J,T\mathcal{I}^{q})
  \end{equation*}
  with $c(0) = c_{0}$ and $h(0) = h_{0}$ on the maximal interval of existence $J$, which is open and contains $0$.
\end{cor}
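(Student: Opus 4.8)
The plan is simply to apply the Cauchy--Lipschitz theorem for ordinary differential equations on Banach manifolds to the geodesic spray furnished by the preceding theorem. Under the stated hypotheses ($r \geq 1$, $q \geq 2r$, $A_{\lambda} = \mathbf{a}_{\lambda}(D) \in \mathcal{E}^{2r}_{\lambda}(\ZZ)$), that theorem provides a \emph{smooth} vector field
\begin{equation*}
  F : T\mathcal{I}^{q} \to TT\mathcal{I}^{q}\,, \qquad F(c,h) = \bigl(h, S_{c}(h)\bigr)\,,
\end{equation*}
on the tangent bundle of $\mathcal{I}^{q} = \mathcal{I}^{q}(\Circle,\RR^{d})$. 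Since $\mathcal{I}^{q}$ is an open subset of the Hilbert space $\HRd{q}$ with all tangent spaces equal to $\HRd{q}$, the total space $T\mathcal{I}^{q}$ is an open subset of $\HRd{q} \times \HRd{q}$ and is therefore a smooth Hilbert manifold; in particular the abstract ODE theory applies to it verbatim in a single chart.

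First I would note that geodesics are exactly the integral curves of $F$: writing a path $c$ together with its velocity $h = c_{t}$, the second-order geodesic equation $(A_{c} c_{t})_{t} = \dotsb$ is equivalent to the first-order system $c_{t} = h$, $h_{t} = S_{c}(h)$, i.e.\ to $(c,h)_{t} = F(c,h)$. Here one uses that $A(\lambda)$, being an elliptic Fourier multiplier of class $\mathcal{E}^{2r}_{\lambda}$, is invertible, so that $S_{c}(h) = -A_{c}^{-1}\{\dotsb\}$ makes sense and the rearrangement is legitimate. Because $F$ is smooth it is in particular locally Lipschitz, so the Cauchy--Lipschitz theorem on Banach manifolds yields, for every initial value $(c_{0},h_{0}) \in T\mathcal{I}^{q}$, a unique integral curve of $F$ through $(c_{0},h_{0})$, defined on a maximal interval $J$ of existence which is open and contains $0$; local uniqueness guarantees that these local solutions glue into a single non-extendable one.

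Smoothness of the solution in $t$ then follows by the usual bootstrap: the curve is $C^{1}$ by the equation, and differentiating $(c,h)_{t} = F(c,h)$ repeatedly and using the smoothness of $F$ shows inductively that $(c,h) \in C^{k}(J, T\mathcal{I}^{q})$ for every $k$, hence $(c,h) \in C^{\infty}(J, T\mathcal{I}^{q})$. (Equivalently, one may invoke the smooth-dependence part of the flow theorem, which additionally gives smooth dependence on the initial data.) Finally, since the solution is by construction valued in $T\mathcal{I}^{q}$, the curve $c(t)$ remains an $H^{q}$-immersion for every $t \in J$, so no separate argument is needed in order to stay in the space of immersions.

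The main point — and essentially the only thing that is not immediate — is that all the genuine analysis has already been done: the preceding theorem establishes that the nonlocal, strongly nonlinear expression $(D_{c,h} A_{c})h + \langle D_{s} h, v\rangle A_{c} h + \langle A_{c} h, D_{s} h\rangle v + (w + w_{0}) D_{s} v$ is a smooth map $\mathcal{I}^{q} \times H^{q} \to H^{q-2r}$ and that composing with the smooth map $c \mapsto A_{c}^{-1} : \mathcal{I}^{q} \to \mathcal{L}(H^{q-2r}, H^{q})$ produces a smooth $S_{c}(h)$ valued in $H^{q}$, so that $F$ really is a smooth vector field on the Hilbert manifold $T\mathcal{I}^{q}$. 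Given this, the corollary is a direct and unobstructed application of standard ODE theory on Banach spaces, and I do not anticipate any serious obstacle beyond spelling out the correspondence between the geodesic equation and the flow of $F$.
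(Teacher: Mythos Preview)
Your proposal is correct and matches the paper's approach exactly: the paper does not even give a separate proof for this corollary, merely remarking that it follows from the preceding theorem by the Cauchy--Lipschitz theorem, which is precisely what you spell out.
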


As a consequence of a no-loss-no-gain argument, we also obtain local well-posedness on the space of smooth curves.

\begin{cor}\label{cor:smooth-local-well-posedness}
  Let $A_{\lambda} = \mathbf{a}_{\lambda}(D)$ belong to the class $\mathcal{E}^{2r}_{\lambda}(\ZZ)$, where $r \geq 1$. Consider the geodesic flow on the tangent bundle $T\Imm(\Circle,\RR^{d})$ induced by the Fourier multiplier $A_{\lambda}$. Then, given any $(c_{0},h_{0}) \in T\Imm(\Circle,\RR^{d})$, there exists a unique non-extendable geodesic
  \begin{equation*}
    (c, h)\in C^\infty(J,T\Imm(\Circle,\RR^{d}))
  \end{equation*}
  with $c(0) = c_{0}$ and $h(0) = h_{0}$ on the maximal interval of existence $J$, which is open and contains $0$.
\end{cor}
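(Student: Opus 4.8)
The plan is to deduce Corollary~\ref{cor:smooth-local-well-posedness} from Corollary~\ref{cor:Hq-local-well-posedness} by a standard "no-loss-no-gain" regularity argument, in the spirit of Ebin--Marsden~\cite{EM1970} and its subsequent uses in~\cite{CK2003,EK2014,BEK2015}. Fix initial data $(c_0, h_0) \in T\Imm(\Circle,\RR^d)$, so in particular $(c_0,h_0) \in T\mathcal I^q$ for \emph{every} $q \ge 2r$. By Corollary~\ref{cor:Hq-local-well-posedness}, for each such $q$ there is a unique non-extendable geodesic $(c^q, h^q) \in C^\infty(J^q, T\mathcal I^q)$ with these initial data. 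The first step is a consistency observation: since $\mathcal I^{q'} \hookrightarrow \mathcal I^{q}$ for $q' \ge q$ and the spray restricts accordingly, uniqueness of integral curves of the (smooth) spray vector field forces $(c^{q'}, h^{q'})$ to coincide with $(c^q, h^q)$ on $J^{q'} \subseteq J^q$; hence the intervals are nested, $J^{q'} \subseteq J^q$, and we may write $J_\infty = \bigcap_{q \ge 2r} J^q$ and a common solution $(c,h)$ on $J_\infty$, a priori only known to lie in $C^\infty(J_\infty, T\mathcal I^q)$ for each finite $q$.

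The heart of the argument is to show that $J^q$ does \emph{not} depend on $q$, i.e. $J^{q'} = J^q$ for all $q' \ge q \ge 2r$; once this is known, $J_\infty = J^{2r} =: J$ is open and contains $0$, and $(c,h) \in \bigcap_q C^\infty(J, T\mathcal I^q)$, which by Sobolev embedding means $(c(t), h(t)) \in T\Imm(\Circle,\RR^d)$ for every $t \in J$. Smoothness of $t \mapsto (c(t), h(t))$ as a curve into the Fr\'echet manifold $T\Imm(\Circle,\RR^d)$ then follows because a curve into a Fr\'echet space is smooth iff it is smooth into each of the Banach spaces in an inverse limit presentation (here $C^\infty = \bigcap_q H^q$), and the geodesic equation~\eqref{eq:geodesic-equations} can be used to bootstrap $t$-derivatives; non-extendability in $\Imm(\Circle,\RR^d)$ is inherited from non-extendability in $\mathcal I^{2r}$ together with the equality of intervals. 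To prove $J^{q'} = J^q$ one runs the classical bootstrap: suppose $t_* \in J^q$; we must show $t_* \in J^{q'}$. It suffices to treat $q' = q + 1$ and iterate. Consider the $\Diff(\Circle)$-equivariance built into the whole setup: for $\varphi \in \D{q+1}(\Circle)$, the map $(c,h) \mapsto (c\circ\varphi, h\circ\varphi)$ commutes with the spray $F$ by reparametrization invariance of $G$. Differentiating the flow of $F$ in the $\varphi$-direction (the "Ebin--Marsden trick") shows that if the solution starts with $H^{q+1}$-data then the quantity controlling the $H^{q+1}$-norm satisfies a linear ODE with coefficients bounded in terms of the $H^q$-norm of the solution, which is finite on $[0,t_*]$; hence the $H^{q+1}$-norm cannot blow up before $t_*$, so $t_* \in J^{q+1}$.

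Concretely, the mechanism is: let $\zeta$ be the infinitesimal generator of reparametrization, i.e. differentiate $c \mapsto c\circ\varphi_s$ at $s=0$ where $\varphi_s$ is a flow on $\Circle$ generated by a vector field $X \in C^\infty(\Circle)$. Applying $\partial_s|_{s=0}$ to the geodesic flow and using equivariance, one finds that $\eta(t) := (D c(t)\cdot(c'(t)X))$ — morally "one extra spatial derivative of the solution, disguised as a tangent vector" — solves the Jacobi-type \emph{linear} equation $\eta_{tt} = (D_{(c,h)}F)\cdot(\eta, \eta_t)$ along the already-known $H^q$ solution $(c,h)$. Because $F$ is a smooth map $T\mathcal I^q \to TT\mathcal I^q$ (indeed $S_c(h)$ involves $A_c^{-1}$ composed with terms of order $\le 2r$ acting on $H^q$, landing in $H^{q}$), its derivative is a smooth bundle map, so this is a \emph{linear} ODE in the Banach space $H^q \times H^q$ with continuous-in-$t$ coefficients on the compact interval $[0,t_*]$; such an ODE has its solution defined on all of $[0,t_*]$. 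Choosing $X$ ranging over a basis-like family (e.g. $X(\theta) = e^{ik\theta}$), the collection of $\eta$'s recovers control of $\|c(t)\|_{H^{q+1}} + \|h(t)\|_{H^{q+1}}$, giving the required a priori bound and hence $t_* \in J^{q+1}$. The main obstacle — and the only genuinely delicate point — is verifying this equivariance/commutation rigorously at the Sobolev level $\mathcal I^q$: one must check that $\varphi \mapsto (c\mapsto c\circ\varphi)$ and its derivative interact well with $A_c = R_{\psi_c}\circ A(\ell_c)\circ R_{\psi_c^{-1}}$ (noting $\psi_{c\circ\varphi}$ and $\ell_{c\circ\varphi} = \ell_c$ transform as expected), so that $F$ really is equivariant on $T\mathcal I^q$ and not merely on smooth curves; this is where the $\Diff(\Circle)$-invariance assumption~\eqref{eq:invariance} and the structural form~\eqref{eq:Ac-defines-Al} do the essential work, and all the estimates needed are of the same type already established in Section~\ref{sec:smoothness-metric} and Section~\ref{sec:smoothness-spray}.
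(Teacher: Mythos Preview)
Your argument is correct and matches the paper's approach, which simply invokes the Ebin--Marsden no-loss-no-gain principle via reparametrization invariance (citing~\cite{EM1970,EK2014,Bru2016}). The paper streamlines one point you overcomplicate: it uses only invariance under \emph{translations} $\varphi_\alpha(\theta)=\theta+\alpha$, so the single choice $X=\partial_\theta$ already produces the Jacobi field $(\partial_\theta c(t),\partial_\theta h(t))$ and hence the $H^{q+1}$-bound directly---your basis $X=e^{ik\theta}$ yields no additional information (each such Jacobi field is just $e^{ik\theta}$ times the one for $X=1$), and working with smooth translations rather than general $\varphi\in\D{q+1}(\Circle)$ also sidesteps the Sobolev-composition delicacy you flag at the end.
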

\begin{proof}
  Since the metric is invariant by re-parametrization, it is invariant in particular by translations of the parameter. A similar observation has been used in~\cite[Section 4]{EK2014} to show that local existence of geodesic still holds in the smooth category. The proof is based on a \emph{no loss-no gain result} in spatial regularity, initially formulated in~\cite{EM1970} (see also~\cite{Bru2016}). The same argument is still true here and leads to the local existence of the geodesics on the Fr\'{e}chet manifold $\Imm(\Circle,\RR^{d})$.
\end{proof}

\section{Strong Riemannian metrics}
\label{sec:strong-Riemannian-metricse}

The goal of this section is to show, that metrics of order $s > \frac32$ (induced by a Fourier multiplier $A_{\lambda}$ in the class $\mathcal{E}^{2s}_{\lambda}(\ZZ)$), induce \emph{strong} smooth Riemannian metrics on the Sobolev completion $\mathcal{I}^{s}$ of the same order as the metric. Let $A(\lambda)$ be of class $\mathcal E^{2s}_\lambda(\ZZ)$ with $s > \frac 32$. In Section~\ref{sec:smoothness-spray} it was shown that the corresponding Riemannian metric
\begin{equation}
  G_{c}(h,k) = \int_{\Circle} \left\langle A_{c} h, k \right\rangle \ud s\,,
\end{equation}
can be extended smoothly to $\mathcal I^q(\Circle,\RR^d)$ for $q \geq 2s$. Now we want to improve this to $q \geq s$. Of particular interest is the case $q=s$, when the topologies induced by the inner products $G_{c}(\cdot, \cdot)$ coincide with the manifold topology.

Any positive Hermitian matrix $\mathbf{a}$ has a unique positive square root $\mathbf{b}$ which depends smoothly on $\mathbf{a}$. We have, moreover, the stronger result that an operator $A(\lambda) = \mathbf{a}_{\lambda}(D)$ in the class $\mathcal{E}^{2s}_\lambda$ has a square root $B(\lambda) = \mathbf{b}_{\lambda}(D)$ in the class $\mathcal{E}^{s}_\lambda$ (see Lemma~\ref{lem:square-root}). With $B_{c} = R_{\psi_{c}} \circ B(\ell_{c}) \circ R_{\psi_{c}^{-1}}$ we have the identities
\begin{align}
  A(\ell_{c}) & = B(\ell_{c})^{2}\,, &
  A_{c} &= B_{c}^{2}\,;
\end{align}
furthermore, because $\mathbf b(\lambda, m)$ is a Hermitian matrix, the operator $B(\ell_{c})$ is $L^{2}(d\theta)$-symmetric and for each curve $c$ the operator $B_{c}$ is $L^{2}(ds)$-symmetric. Therefore we can rewrite the metric $G$ in the symmetric form
\begin{equation}\label{eq:symmetric-metric}
  G_{c}(h,k) = \int_{\Circle} \left\langle A_{c} h, k \right\rangle \ud s = \int_{\Circle} \left\langle B_{c} h, B_{c} k \right\rangle \ud s\,.
\end{equation}
We obtain therefore the following expression for the operator $\check G_{c}$ on $T\mathcal{I}^{s}$:
\begin{equation*}
  \check G_{c} = B_{c}^{t} \circ M_{\abs{c^{\prime}}}\circ B_{c} \,,
\end{equation*}
where
\begin{equation*}
  B_{c}: \HRd{s} \to \LRd, \qquad B_{c}^{t}: \LRd \to \HRd{-s}\,,
\end{equation*}
and $B_{c}^{t}$ is the transpose of $B_{c}$. The latter formula can now be used to obtain the following result concerning the smooth extension of this family of inner products into a strong Riemannian metric on $\mathcal{I}^{s}$, provided $s > 3/2$.

\begin{thm}\label{thm:smoothness-strong-metric}
  Let $s > 3/2$ and $B_{\lambda} \in \mathcal{E}^{s}_{\lambda}(\ZZ)$. Then the expression
  \begin{equation*}
    G_{c}(h,k) = \int_{\Circle} \sprod{B_{c} h}{B_{c} k} \ud s\,,
  \end{equation*}
  defines a \emph{smooth and strong} Riemannian metric on $\mathcal{I}^{s}(\Circle,\RR^d)$.
\end{thm}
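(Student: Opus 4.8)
The plan is to verify the two defining properties of a smooth strong Riemannian metric separately: smoothness of the map $c \mapsto \check G_c$ as a map into the appropriate operator space, and the fact that $\check G_c$ is a (topological) linear isomorphism $T_c\mathcal I^s \to T^*_c\mathcal I^s$ for every $c$. We will work with the symmetric form $\check G_c = B_c^t \circ M_{\abs{c'}} \circ B_c$, which is already singled out in the discussion preceding the theorem.

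\medskip

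\noindent\textbf{Step 1: Smoothness.} First I would show that $c \mapsto B_c$ is smooth as a map $\mathcal I^s \to \mathcal L(\HRd{s}, \LRd)$. Since $B_\lambda \in \mathcal E^s_\lambda(\ZZ) \subset \mathcal S^s_\lambda(\ZZ)$ and $s \ge 1$ (as $s > 3/2$), this is exactly Proposition~\ref{prop:smoothness-Ac} applied with $r = s$ and $q = s$: the hypotheses $q > \frac 32$ and $q \ge r$ are met with equality in the latter, giving smoothness of $c \mapsto B_c$ into $\mathcal L(H^s, H^{s-s}) = \mathcal L(H^s, L^2)$. Next, the transpose map $T \mapsto T^t$ is a bounded linear (hence smooth) map $\mathcal L(\HRd{s}, \LRd) \to \mathcal L(\LRd, \HRd{-s})$, so $c \mapsto B_c^t$ is smooth into $\mathcal L(L^2, H^{-s})$. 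The multiplication operator satisfies $c \mapsto M_{\abs{c'}} \in \mathcal L(H^\rho, H^\rho)$ smoothly for $0 \le \rho \le s - 1$, in particular on $L^2$ (as in the proof of Corollary~\ref{cor:smooth-G-extension}). Since composition of continuous linear maps between fixed Banach spaces is a bounded bilinear, hence smooth, operation, the composite $c \mapsto B_c^t \circ M_{\abs{c'}} \circ B_c$ is smooth as a map $\mathcal I^s \to \mathcal L(\HRd{s}, \HRd{-s})$. This gives smoothness of $\check G$, and unwinding the formula~\eqref{eq:symmetric-metric} shows the associated bilinear form $G_c(h,k)$ is smooth in $(c,h,k)$.

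\medskip

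\noindent\textbf{Step 2: Non-degeneracy / strongness.} It remains to show that for each $c \in \mathcal I^s$ the map $\check G_c : \HRd{s} \to \HRd{-s}$ is a topological linear isomorphism. I would argue as follows. The operator $B(\ell_c) = \mathbf b_{\ell_c}(D)$ is an elliptic Fourier multiplier of class $\mathcal S^s(\ZZ)$ (ellipticity coming from condition (3) in the definition of $\mathcal E^s_\lambda$, used at the fixed parameter value $\lambda = \ell_c$), hence induces a bounded isomorphism $\HRd{s} \to \LRd$; conjugation by the $H^s$-diffeomorphism $\psi_c$ preserves this, so $B_c : \HRd{s} \to \LRd$ is an isomorphism, and therefore so is its transpose $B_c^t : \LRd \to \HRd{-s}$ (identifying $(L^2)^* \cong L^2$ and $(H^s)^* \cong H^{-s}$). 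Finally $M_{\abs{c'}} : \LRd \to \LRd$ is an isomorphism because $\abs{c'}$ and $1/\abs{c'}$ are bounded (here one uses that $c \in \mathcal I^s$ with $s > 3/2 > 1$, so $\abs{c'} \in H^{s-1} \hookrightarrow C^0$ is continuous and bounded away from zero on the compact circle). Being a composition of three isomorphisms, $\check G_c$ is an isomorphism. Positivity of the Hermitian matrices $\mathbf b(\lambda, m)$ additionally guarantees $G_c(h,h) > 0$ for $h \ne 0$, so $G$ is a genuine Riemannian metric, and since $\check G_c$ is surjective onto $T_c^*\mathcal I^s = \HRd{-s}$ it is \emph{strong}.

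\medskip

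\noindent\textbf{Main obstacle.} The genuinely substantive input is Step 1, specifically invoking Proposition~\ref{prop:smoothness-Ac} in the borderline case $q = r = s$: all the real analytic work (handling the nonlocal, parameter-dependent Fourier multiplier conjugated by the reparametrization $\psi_c$) has been done there, so here one only has to check that the hypotheses still apply and that no derivative loss occurs when $q$ equals the order of the operator. The existence of the square root $B_\lambda \in \mathcal E^s_\lambda(\ZZ)$ of $A_\lambda \in \mathcal E^{2s}_\lambda(\ZZ)$, used to obtain the symmetric form, is the other key ingredient, supplied by Lemma~\ref{lem:square-root}. Beyond assembling these, the argument is a routine composition-of-smooth-maps and composition-of-isomorphisms bookkeeping.
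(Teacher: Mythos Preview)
Your proposal is correct and follows essentially the same route as the paper: write $\check G_c = B_c^t \circ M_{\abs{c'}} \circ B_c$, invoke Proposition~\ref{prop:smoothness-Ac} at $q=r=s$ for smoothness of $c\mapsto B_c$, transport this to $B_c^t$ via boundedness of transposition, use smoothness of $c\mapsto M_{\abs{c'}}$ on $L^2$, and conclude by smoothness of composition; strongness comes from each factor being an isomorphism. Your Step~2 is in fact slightly more explicit than the paper's (which simply asserts that $\check G_c$ is a bounded isomorphism), but the argument is the same.
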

\begin{rem}
  For strong  metrics on a Lie group the invariance of the metric implies the geodesic and metric completeness of the space, see \cite[Lemma 5.2]{GMR2009}. This has been used in \cite{BEK2015}  to show completeness of the $H^s$-metric on $\Diff(\RR^d)$, see also \cite{BV2014} for integer orders on the diffeomorphism group of a general manifold. Unfortunately, there is no automatic analogue of this result in our situation. To prove the global well-posedness on the  space of regular curves additional assumptions on the dependence of the operator $A$ on the length will be necessary. In future work, we plan to follow this line of research and use a similar strategy as in \cite{Bru2015} for integer orders to obtain this result.
\end{rem}

\begin{proof}
  Since $B_{\lambda} \in \mathcal{E}^{s}_{\lambda}(\ZZ)$ and $s > 3/2$, the mapping
  \begin{equation*}
    \check G_{c} = B_{c}^{t} \circ M_{\abs{c^{\prime}}}\circ B_{c}
  \end{equation*}
  defines, for each $c\in \mathcal{I}^{s}$, a bounded isomorphism between $\HRd{s}$ and its dual $\HRd{-s}$. Thus we need only to show that the mapping
  \begin{equation*}
    c \mapsto \check G_{c}, \qquad \mathcal{I}^{s} \to \mathcal{L}(\HRd{s},\HRd{-s})
  \end{equation*}
  is smooth. Since transposition between Banach spaces is itself a bounded operator it follows that the transpose
  \begin{equation*}
    c \mapsto B_{c}^{t}, \quad \mathcal{I}^{s} \to \mathcal{L}(\LRd,\HRd{-s})
  \end{equation*}
  is smooth iff
  \begin{equation*}
    c \mapsto B_{c}, \quad \mathcal{I}^{s} \to \mathcal{L}(\HRd{s},\LRd)
  \end{equation*}
  is smooth, which is the case by Proposition~\ref{prop:smoothness-Ac}. Now, the mapping
  \begin{equation*}
    c \mapsto M_{\abs{c^{\prime}}} , \qquad \mathcal{I}^{s} \to \mathcal{L}(\LRd,\LRd),
  \end{equation*}
  is smooth for $s > 3/2$. Finally, since composition of bounded operators between Banach spaces is itself a bounded operator, it follows that the composition
  \begin{equation*}
    c \mapsto B_{c}^{t} \circ M_{\abs{c^{\prime}}}\circ B_{c}, \qquad \mathcal{I}^{s} \to \mathcal{L}(\HRd{s},\HRd{-s}),
  \end{equation*}
  is smooth.
\end{proof}

\begin{rem}\label{rem:strong-geodesic-spray}
  A smooth, strong Riemannian metric on a Hilbert manifold has a smooth spray (see~\cite{Lan1999} for instance). However, formula~\eqref{eq:geodesic-spray-imm} is no longer useful in that case because it is not clear that this expression extends to a smooth map from $\mathcal{I}^{s}$ to $\HRd{s}$. Following Lang~\cite[Proposition 7.2]{Lan1999}, we introduce the $H^{s}$ inner product on $\HRd{s}$:
  \begin{equation*}
    \llangle h,k \rrangle_{H^{s}} := \int_{\Circle} \sprod{\Lambda^{s} h}{\Lambda^{s} k} \ud \theta
  \end{equation*}
  so that
  \begin{equation*}
    G_{c}(h,k) = \llangle P_{c}h,k \rrangle_{H^{s}}, \quad \text{where} \quad P_{c} := \left(\Lambda^{-s} \circ B_{c}\right)^{t} \circ \Lambda^{-s} \circ M_{\abs{c^{\prime}}} \circ B_{c}.
  \end{equation*}
  Here, $Q^{t}$ is the transpose of a bounded operator
  \begin{equation*}
    Q: \HRd{s} \to \HRd{s}.
  \end{equation*}
  In that case, there is an alternative expression for the spray, which is nevertheless equivalent to~\eqref{eq:geodesic-spray-imm}. It is given implicitly by the formula
  \begin{equation*}
    \llangle S_{c}(h), P_{c}k \rrangle_{H^{s}} = \frac{1}{2} \llangle \left(D_{c,k}P_{c}\right)h,h \rrangle_{H^{s}} - \llangle\left(D_{c,h}P_{c}\right)h,k \rrangle_{H^{s}}.
  \end{equation*}
\end{rem}

\appendix

\section{Smooth curves in $\mathcal{S}^{r}(\ZZ)$}
\label{sec:smooth-curves}

The right framework to work with smooth maps on infinite dimensional (other than Banach spaces) are \emph{convenient vector spaces} (see~\cite{FK1988,KM1997}). Note that every Fr\'{e}chet space is a convenient vector space. We will need the following lemma about recognising smooth curves in convenient vector spaces.

\begin{lem}{\cite[4.1.19]{FK1988}}\label{lem:smooth-curves}
  Let $c: \RR \to E$ be a curve in a convenient vector space. Let $\mathcal{V} \subseteq E'$ be a point-separating subset of bounded linear functionals such that the bornology of $E$ has a basis of $\sigma(E, \mathcal{V})$-closed sets. Then the following are equivalent:
  \begin{enumerate}
    \item
          $c$ is smooth;
    \item
          There exist locally bounded curves $c^k: \RR \to E$ such that $\ell \circ c : \RR \to \RR$ is smooth with $(\ell \circ c)^{(k)} = \ell \circ c^k$, for each $\ell \in \mathcal{V}$.
  \end{enumerate}
\end{lem}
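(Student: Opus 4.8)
The plan is to prove the two implications separately, with the reverse implication $(2)\Rightarrow(1)$ carrying essentially all the content. For $(1)\Rightarrow(2)$ I would simply take $c^k := c^{(k)}$, the iterated derivatives of the smooth curve $c$, which exist by hypothesis. Each $c^{(k)}$ is again a smooth curve and is therefore locally bounded, since a smooth curve is Lipschitz, hence bounded, on every compact interval. As every $\ell\in\mathcal V$ is a bounded linear functional, the composite $\ell\circ c$ is a smooth real function, and the chain rule for bounded linear maps applied iteratively yields $(\ell\circ c)^{(k)}=\ell\circ c^{(k)}=\ell\circ c^k$ for all $k$, which is exactly (2).

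For $(2)\Rightarrow(1)$ I would first reduce smoothness to the single assertion that $c$ is differentiable with derivative $c^1$. This reduction is clean because the shifted family $(c^1,c^2,c^3,\dots)$ again satisfies the hypotheses of (2): one has $\ell\circ c^1=(\ell\circ c)'$ smooth with $(\ell\circ c^1)^{(k)}=(\ell\circ c)^{(k+1)}=\ell\circ c^{k+1}$, and the $c^{k+1}$ remain locally bounded. Hence once differentiability with $c'=c^1$ is known in general, induction gives $c^{(k)}=c^k$ for all $k$ and so smoothness of $c$. To prove differentiability I fix $t$ in a compact interval $K$ and study the remainder
\[
  r_h(t):=\frac{c(t+h)-c(t)}{h}-c^1(t).
\]
Testing against any $\ell\in\mathcal V$ and using the scalar Taylor formula with integral remainder for $\ell\circ c$ (whose first two derivatives are $\ell\circ c^1$ and $\ell\circ c^2$ by hypothesis) gives
\[
  \ell\big(r_h(t)\big)=h\int_{0}^{1}(1-u)\,\ell\big(c^2(t+uh)\big)\ud u,
\]
and therefore $\abs{\ell(r_h(t))}\le \tfrac{\abs h}{2}\sup_{u\in[0,1]}\abs{\ell(c^2(t+uh))}$.

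The heart of the argument, and the step I expect to be the main obstacle, is to upgrade these scalar estimates into a genuine Mackey limit $r_h(t)\to 0$ in $E$. Since $c^2$ is locally bounded, the set $\set{c^2(\tau):\tau\in K'}$ is bounded on the slightly enlarged interval $K'\supseteq K+[-1,1]$; by the hypothesis that the bornology of $E$ has a basis of $\sigma(E,\mathcal V)$-closed sets, I enclose it in a bounded, absolutely convex, $\sigma(E,\mathcal V)$-closed set $B$. The estimate then reads $\abs{\ell(r_h(t))}\le \tfrac{\abs h}{2}\,p_B(\ell)$ for every $\ell\in\mathcal V$ and every $t\in K$, where $p_B$ is the support function of $B$. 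Because $\mathcal V$ is point-separating, the pairing $\langle E,\mathcal V\rangle$ is faithful, and the bipolar theorem for this pairing applied to the $\sigma(E,\mathcal V)$-closed absolutely convex set $B$ (which thus equals its own bipolar) converts this family of scalar bounds into the single vector inclusion $r_h(t)\in \tfrac{\abs h}{2}\,B$, uniformly in $t\in K$. This exhibits $r_h(t)$ as $\tfrac{\abs h}{2}$ times an element of the \emph{fixed} bounded set $B$, which is precisely Mackey convergence $r_h(t)\to 0$ as $h\to 0$, locally uniformly in $t$. Hence $c$ is differentiable with $c'=c^1$, and the induction above finishes the proof.

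I would stress that no vector-valued integral of $c^2$ is needed a priori: the entire passage from scalar to vector information is carried by the bipolar theorem, and the scalar Taylor formula requires only $\ell\circ c\in C^2$, which the hypothesis supplies. The two standing assumptions play complementary and indispensable roles here, which I would make explicit in the write-up: point-separation makes the dual pairing faithful so that the bipolar recovers $B$ exactly, while the $\sigma(E,\mathcal V)$-closed bornological basis guarantees that the witness set $B$ is simultaneously bounded (so that $\tfrac{\abs h}{2}B$ furnishes a Mackey null family) and equal to its bipolar (so that the scalar bounds reconstruct the vector inclusion). The technical care will lie in checking that $B$ can indeed be taken absolutely convex and $\sigma(E,\mathcal V)$-closed at once, which follows by taking the $\sigma(E,\mathcal V)$-closure of the absolutely convex hull of $\set{c^2(\tau):\tau\in K'}$ and using the hypothesis to trap it inside a bounded $\sigma(E,\mathcal V)$-closed set.
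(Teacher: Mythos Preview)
The paper does not prove this lemma at all; it is quoted as a black box from Fr\"olicher--Kriegl and only applied later. Your argument is correct and is essentially the standard proof: $(1)\Rightarrow(2)$ is immediate with $c^k=c^{(k)}$, and for $(2)\Rightarrow(1)$ you reduce by shifting the sequence to a single differentiation step, apply the scalar second-order Taylor remainder to $\ell\circ c$, trap $c^2(K')$ in a bounded absolutely convex $\sigma(E,\mathcal V)$-closed set $B$, and use the bipolar theorem for the dual pair to turn the uniform scalar bounds into the inclusion $r_h(t)\in\tfrac{|h|}{2}B$, which gives Mackey and hence topological convergence of the difference quotient.

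One small point worth making explicit in the write-up: the bipolar is taken in the dual pair $\langle E,\operatorname{span}\mathcal V\rangle$, so you need the Taylor identity $\ell(r_h(t))=h\int_0^1(1-u)\,\ell(c^2(t+uh))\,\mathrm{d}u$ for every $\ell$ in $\operatorname{span}\mathcal V$, not just in $\mathcal V$. This is automatic since the identity is linear in $\ell$, but as written you only state the estimate for $\ell\in\mathcal V$. Your construction of $B$ (take the $\sigma(E,\mathcal V)$-closure of the absolutely convex hull and dominate it by a bounded $\sigma(E,\mathcal V)$-closed set from the basis) is the right way to get $B$ simultaneously bounded, absolutely convex, and $\sigma(E,\mathcal V)$-closed.
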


To apply this lemma to the space $\mathcal{S}^{r}(\ZZ)$ of Fourier multipliers we need to choose a suitable set $\mathcal{V}$ of linear functionals. This is accomplished in the following lemma.

\begin{lem}\label{lem:V-bornology}
  Let $r \in \RR$ and $\mathcal{V} = \{ \lambda \circ \operatorname{ev}_m \,:\, m \in \ZZ,\, \lambda \in \mathcal{L}(\CC^d)' \} \subset \mathcal{S}^{r}(\ZZ)'$. Then the bornology of $\mathcal{S}^{r}(\ZZ)$ has a basis consisting of $\sigma(\mathcal{S}^{r}(\ZZ), \mathcal{V})$-closed sets.
\end{lem}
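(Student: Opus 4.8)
The plan is to produce an explicit basis of the bornology of $\mathcal{S}^{r}(\ZZ)$ consisting of sublevel sets of the defining seminorms, and then to observe that each such set is cut out by closed conditions for the weak topology $\sigma(\mathcal{S}^{r}(\ZZ),\mathcal{V})$.

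First I would use that $\mathcal{S}^{r}(\ZZ)$ is a Fr\'{e}chet space with topology generated by the seminorms $p_{\alpha}$, $\alpha\in\NN$, so a subset $B\subseteq\mathcal{S}^{r}(\ZZ)$ is bounded if and only if $C_{\alpha}(B):=\sup_{\mathbf{a}(D)\in B}p_{\alpha}(\mathbf{a}(D))<\infty$ for every $\alpha$. For a positive sequence $(C_{\alpha})_{\alpha\in\NN}$ set
\[
  B_{(C_{\alpha})}:=\set{\mathbf{a}(D)\in\mathcal{S}^{r}(\ZZ)\ :\ p_{\alpha}(\mathbf{a}(D))\le C_{\alpha}\ \text{for all }\alpha\in\NN}\,.
\]
Each $B_{(C_{\alpha})}$ is bounded, and every bounded $B$ is contained in $B_{(\max(C_{\alpha}(B),1))}$; hence $\set{B_{(C_{\alpha})}\ :\ (C_{\alpha})\in(\RR_{>0})^{\NN}}$ is a basis of the bornology of $\mathcal{S}^{r}(\ZZ)$, and it suffices to prove that every $B_{(C_{\alpha})}$ is $\sigma(\mathcal{S}^{r}(\ZZ),\mathcal{V})$-closed.

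Next I would unwind the weak topology. Since $\mathcal{L}(\CC^{d})$ is finite-dimensional, $\mathcal{L}(\CC^{d})'$ separates its points, so $\mathcal{V}$ is point-separating and $\sigma(\mathcal{S}^{r}(\ZZ),\mathcal{V})$ is exactly the topology of pointwise convergence of symbols; in particular each evaluation $\operatorname{ev}_{m}:\mathbf{a}(D)\mapsto\mathbf{a}(m)$ is $\sigma(\mathcal{S}^{r}(\ZZ),\mathcal{V})$-continuous with values in $\mathcal{L}(\CC^{d})$. Using the binomial expansion $\Delta^{\alpha}\mathbf{a}(m)=\sum_{j=0}^{\alpha}(-1)^{\alpha-j}\binom{\alpha}{j}\mathbf{a}(m+j)$, the map $\mathbf{a}(D)\mapsto\Delta^{\alpha}\mathbf{a}(m)$ is a finite linear combination of the continuous maps $\operatorname{ev}_{m+j}$, hence is $\sigma(\mathcal{S}^{r}(\ZZ),\mathcal{V})$-continuous into $\mathcal{L}(\CC^{d})$; composing with the norm on $\mathcal{L}(\CC^{d})$ shows that $\mathbf{a}(D)\mapsto\norm{\Delta^{\alpha}\mathbf{a}(m)}$ is $\sigma(\mathcal{S}^{r}(\ZZ),\mathcal{V})$-continuous. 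Therefore each set $\set{\mathbf{a}(D)\ :\ \norm{\Delta^{\alpha}\mathbf{a}(m)}\le C_{\alpha}\langle m\rangle^{r-\alpha}}$ is $\sigma(\mathcal{S}^{r}(\ZZ),\mathcal{V})$-closed, and since
\[
  B_{(C_{\alpha})}=\bigcap_{\alpha\in\NN}\ \bigcap_{m\in\ZZ}\set{\mathbf{a}(D)\ :\ \norm{\Delta^{\alpha}\mathbf{a}(m)}\le C_{\alpha}\langle m\rangle^{r-\alpha}}
\]
it is an intersection of closed sets, hence closed, which finishes the proof.

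I do not expect a genuine obstacle here; the only point requiring care is the choice of basis of the bornology. One must use the sublevel sets of the countably many seminorms $p_{\alpha}$ (rather than arbitrary bounded sets), because then $B_{(C_{\alpha})}$ becomes an intersection of the \emph{closed} conditions $\norm{\Delta^{\alpha}\mathbf{a}(m)}\le C_{\alpha}\langle m\rangle^{r-\alpha}$, and it is essential that each finite difference $\Delta^{\alpha}$ involves only finitely many point evaluations, so that $\sigma(\mathcal{S}^{r}(\ZZ),\mathcal{V})$ already controls these quantities.
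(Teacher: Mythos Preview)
Your proof is correct and takes a genuinely different, more elementary route than the paper's. The paper proceeds abstractly: it invokes the known result \cite[4.1.21]{FK1988} that the bornology of $\ell^\infty(\ZZ,\mathcal{L}(\CC^{d}))$ has a basis of $\sigma(\ell^\infty,\mathcal{V})$-closed sets, then embeds $\mathcal{S}^{r}(\ZZ)$ as a closed subspace of the product $\prod_{\alpha\in\NN}\ell^\infty(\ZZ,\mathcal{L}(\CC^{d}))$ via $\mathbf{a}\mapsto(\langle m\rangle^{-(r-\alpha)}\Delta^{\alpha}\mathbf{a}(m))_{m,\alpha}$, and finally observes that the pullback of the natural separating functionals on the product lies in the linear span of $\mathcal{V}$. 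You instead work directly: you write down an explicit basis $B_{(C_{\alpha})}$ of the bornology as joint sublevel sets of the defining seminorms, and then verify by hand that each $B_{(C_{\alpha})}$ is $\sigma(\mathcal{S}^{r}(\ZZ),\mathcal{V})$-closed, using that $\Delta^{\alpha}\mathbf{a}(m)$ is a finite linear combination of point evaluations and that $\mathcal{L}(\CC^{d})$ is finite-dimensional. Your argument is self-contained and avoids the external citation, at the cost of unwinding the combinatorics of $\Delta^{\alpha}$; the paper's argument is shorter but relies on the $\ell^\infty$ result and the machinery of product bornologies. Both hinge on the same essential fact, namely that the seminorms $p_{\alpha}$ are suprema of quantities each depending on only finitely many values $\mathbf{a}(m)$.
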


\begin{proof}
  We can regard $\mathcal{V}$ as a subset of both $\mathcal{S}^{r}(\ZZ)'$ as well as $\ell^\infty(\ZZ, \mathcal{L}(\CC^d))'$. It is shown in \cite[4.1.21]{FK1988} that the bornology of $\ell^\infty(\ZZ, \mathcal{L}(\CC^d))$ has a basis of $\sigma(\ell^\infty(\ZZ, \mathcal{L}(\CC^d)), \mathcal{V})$-closed sets. We embed $\mathcal{S}^{r}(\ZZ)$ as a closed subspace into
  \begin{equation*}
    \iota: \mathcal{S}^{r}(\ZZ) \to \prod_{\alpha \in \NN} \ell^\infty(\ZZ, \mathcal{L}(\CC^d)),\,
    \left(\mathbf{a}(m)\right)_{m\in \ZZ} \mapsto \left(  \langle m \rangle ^{-(r-\alpha)} \Delta^{\alpha} \mathbf{a}(m) \right)_{m \in \ZZ,\, \alpha \in \NN}\,.
  \end{equation*}
  Because the bornology of the product is the product bornology it follows that a basis for the bornology on
  $\prod_{\alpha \in \NN} \ell^\infty(\ZZ, \mathcal{L}(\CC^d))$ is given by $\sigma(\prod_{\alpha} \ell^\infty,\mathcal{W})$-closed sets, where $\mathcal{W} = \bigcup_{\alpha \in \NN} \mathcal{V} \circ \operatorname{pr}_{\alpha}$ and $\operatorname{pr}_{\alpha}$ denotes the canonical projection onto the $\alpha$-th factor. Here we note that on $\mathcal{S}^{r}(\ZZ)'$ the set $\iota^\ast(\mathcal{W})$ is contained in the linear space of $\mathcal{V}$ and hence the bornology of $\mathcal{S}^{r}(\ZZ)$ has a basis consisting of $\sigma(\mathcal{S}^{r}(\ZZ), \mathcal{V})$-closed sets.
\end{proof}

\section{Square-root of an operator in $\mathcal{E}^{r}_{\lambda}(\ZZ)$}
\label{sec:square-root}

Since a positive definite Hermitian matrix has a unique positive square root, which depends smoothly on its coefficients, we can define formally the square root $B_{\lambda} = \op{\mathbf{a}(\lambda, m)^{1/2}}$ of an element $A_{\lambda} = \mathbf{a}_{\lambda}(D)$ in the class $\mathcal{E}^{r}_{\lambda}(\ZZ)$. In order to prove this we need the following lemma.

\begin{lem}{\cite[Lemma~4.8]{BEK2015}}\label{lem:hermite-sqrt-estimate}
  Let $a, b, x \in \mathcal{L}(\CC^d)$ be three matrices satisfying
  \begin{equation*}
    bx + xb = a\,,
  \end{equation*}
  with $b$ Hermitian and positive definite. Then
  \begin{equation*}
    \norm{x} \leq \sqrt{\frac d2} \norm{b^{-1}}\norm{a},
  \end{equation*}
  where $\norm{\cdot}$ denotes the Frobenius norm, i.e. $\norm{x} = \sqrt{\operatorname{tr} xx^\ast}$.
\end{lem}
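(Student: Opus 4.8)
The plan is to recognise $bx+xb=a$ as a Sylvester (Lyapunov-type) equation with positive-definite symbol $b$, to solve it explicitly after diagonalising $b$, and then to read off the Frobenius-norm estimate.

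First I would diagonalise. Since $b$ is Hermitian and positive definite there is a unitary $U\in\mathcal{L}(\CC^d)$ and positive numbers $\mu_1,\dots,\mu_d$ with $b=U^\ast D U$, where $D=\operatorname{diag}(\mu_1,\dots,\mu_d)$. Conjugating $bx+xb=a$ by $U$ and putting $\tilde x=UxU^\ast$, $\tilde a=UaU^\ast$ turns the equation into $D\tilde x+\tilde x D=\tilde a$. Since the Frobenius norm, the number $\norm{b^{-1}}$, and $\min_k\mu_k$ are all invariant under unitary conjugation, it suffices to treat the case $b=D$ diagonal.

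Second, in this frame the equation decouples entrywise: the $(i,j)$ component reads $(\mu_i+\mu_j)\,x_{ij}=a_{ij}$, hence $x_{ij}=a_{ij}/(\mu_i+\mu_j)$. Since $\mu_i+\mu_j>0$ this shows at once that the solution $x$ is unique and that $\abs{x_{ij}}\le\abs{a_{ij}}/(2\mu_{\min})$, where $\mu_{\min}:=\min_k\mu_k$. Summing in $i,j$,
\[
  \norm{x}^2=\sum_{i,j}\abs{x_{ij}}^2\le\frac{1}{4\mu_{\min}^2}\sum_{i,j}\abs{a_{ij}}^2=\frac{1}{4\mu_{\min}^2}\,\norm{a}^2 .
\]
Third, I would convert $\mu_{\min}$ into $\norm{b^{-1}}$: the eigenvalues of $b^{-1}$ are the $\mu_k^{-1}$, so $\mu_{\min}^{-1}$ is the operator norm of $b^{-1}$, which is dominated by its Frobenius norm $\norm{b^{-1}}$. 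Therefore $\norm{x}\le\tfrac{1}{2}\,\norm{b^{-1}}\,\norm{a}\le\sqrt{d/2}\;\norm{b^{-1}}\,\norm{a}$, using $\tfrac{1}{2}\le\sqrt{d/2}$ for $d\ge1$; this is the claimed inequality (with, in fact, a slightly sharper constant).

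A coordinate-free variant avoids diagonalisation: pairing $bx+xb=a$ with $x$ under the trace gives $\tr(x^\ast a)=\tr(x^\ast b x)+\tr(b\,x^\ast x)\ge 2\mu_{\min}\norm{x}^2$, since $b\ge\mu_{\min}I$ forces both traces to be real and bounds them from below; Cauchy--Schwarz, $\abs{\tr(x^\ast a)}\le\norm{x}\norm{a}$, then yields the same bound, and the case $a=0$ recovers uniqueness. (Equivalently one may use the integral representation $x=\int_0^\infty e^{-tb}a\,e^{-tb}\ud t$, whose norm is bounded by $\norm{a}\int_0^\infty e^{-2t\mu_{\min}}\ud t=\norm{a}/(2\mu_{\min})$.) There is no genuine obstacle here; the content is elementary linear algebra, and the only point requiring attention is keeping the operator-norm versus Frobenius-norm bookkeeping consistent so that the dimensional constant comes out as stated.
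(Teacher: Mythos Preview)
The paper does not actually prove this lemma; it is quoted verbatim from \cite[Lemma~4.8]{BEK2015} and only \emph{used} in the induction for Lemma~\ref{lem:square-root}. So there is no in-paper argument to compare against.

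Your proof is correct. Diagonalising $b$ reduces the Sylvester equation to the scalar relations $(\mu_i+\mu_j)x_{ij}=a_{ij}$, from which $\norm{x}\le \tfrac{1}{2\mu_{\min}}\norm{a}$ follows immediately; the passage from $\mu_{\min}^{-1}$ (the operator norm of $b^{-1}$) to the Frobenius norm $\norm{b^{-1}}$ is the right inequality, and your final remark that $\tfrac12\le\sqrt{d/2}$ shows you have in fact established the lemma with the sharper dimension-free constant $\tfrac12$. Both alternative routes you sketch---the trace pairing and the Lyapunov integral $x=\int_0^\infty e^{-tb}a\,e^{-tb}\ud t$---are also valid and yield the same bound; in the trace argument the key point, which you note, is that $\tr(x^\ast bx)$ and $\tr(bx^\ast x)$ are automatically real and bounded below by $\mu_{\min}\norm{x}^2$, forcing $\tr(x^\ast a)$ to be real as well. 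There is nothing to repair here; for the purposes of Lemma~\ref{lem:square-root} any constant depending only on $d$ would suffice, so your sharper $\tfrac12$ is a harmless bonus.
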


The following lemma together with its proof is a generalisation of \cite[Lemma~4.7]{BEK2015} to our situation of one-parameter families of symbols.

\begin{lem}\label{lem:square-root}
  The positive square root of an operator in the class $\mathcal{E}^{r}_{\lambda}(\ZZ)$ belongs to the class $\mathcal{E}^{r/2}_{\lambda}(\ZZ)$. Conversely, the square of an operator in the class $\mathcal{E}^{r}_{\lambda}(\ZZ)$ belongs to the class $\mathcal{E}^{2r}_{\lambda}(\ZZ)$.
\end{lem}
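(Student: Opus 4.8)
The statement has two directions that are essentially inverse to each other, so I would organize the proof around the passage $\mathbf{a} \leftrightarrow \mathbf{a}^{1/2}$ at the level of symbols, checking in each direction the three defining properties of the class $\mathcal{E}^{r}_{\lambda}(\ZZ)$: membership in $\mathcal{S}^{\bullet}_{\lambda}(\ZZ)$ (i.e.\ the symbol estimates \eqref{eq:symbol-la-est} together with smooth dependence on $\lambda$), positive Hermitian values, and local uniform ellipticity. Positivity and Hermitian symmetry are immediate in both directions: the unique positive square root of a positive Hermitian matrix is again positive Hermitian, and squaring preserves these properties as well. Ellipticity is also straightforward: if $\norm{\mathbf{a}(\lambda,m)^{-1}} \leq C \langle m\rangle^{-r}$ locally uniformly, then $\norm{(\mathbf{a}(\lambda,m)^{1/2})^{-1}} = \norm{\mathbf{a}(\lambda,m)^{-1/2}} \leq \norm{\mathbf{a}(\lambda,m)^{-1}}^{1/2} \leq C^{1/2}\langle m\rangle^{-r/2}$ (using that for positive Hermitian matrices the operator norm of the square root is the square root of the operator norm), and conversely squaring doubles the exponent. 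So the real content is the symbol estimates for the square root.

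\textbf{The symbol estimates.} For the first (harder) direction, set $\mathbf{b}(\lambda,m) = \mathbf{a}(\lambda,m)^{1/2}$; I must show $\norm{\partial_\lambda^\beta \Delta^\alpha \mathbf{b}(\lambda,m)} \leq C_{\alpha,\beta}\langle m\rangle^{r/2-\alpha}$ locally uniformly in $\lambda$. The key mechanism is Lemma~\ref{lem:hermite-sqrt-estimate}: from the defining relation $\mathbf{b}\cdot\mathbf{x} + \mathbf{x}\cdot\mathbf{b} = \mathbf{c}$ one recovers a bound on $\mathbf{x}$ in terms of $\norm{\mathbf{b}^{-1}}$ and $\norm{\mathbf{c}}$. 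The plan is an induction on $\alpha + \beta$. Differentiating the identity $\mathbf{b}(\lambda,m)^2 = \mathbf{a}(\lambda,m)$ once (say with $\partial_\lambda$ or a single $\Delta$, the latter using the Leibniz-type rule $\Delta(\mathbf{b}^2)(m) = \mathbf{b}(m+1)\Delta\mathbf{b}(m) + \Delta\mathbf{b}(m)\,\mathbf{b}(m)$) produces a Sylvester equation of the form $\mathbf{b}(\lambda,m+1)\cdot\Delta\mathbf{b}(\lambda,m) + \Delta\mathbf{b}(\lambda,m)\cdot\mathbf{b}(\lambda,m) = \Delta\mathbf{a}(\lambda,m)$, to which Lemma~\ref{lem:hermite-sqrt-estimate} applies with $b = \mathbf{b}(\lambda,m)$ (after a harmless symmetrization, since the two coefficients $\mathbf{b}(\lambda,m\pm1)$ differ; one can either absorb the difference as a lower-order term or note that Lemma~\ref{lem:hermite-sqrt-estimate} generalizes to $b_1 x + x b_2 = a$ with $b_1, b_2$ positive Hermitian). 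In the general step, applying $\partial_\lambda^\beta\Delta^\alpha$ to $\mathbf{b}^2 = \mathbf{a}$ and expanding by the Leibniz rule isolates the top-order term $\mathbf{b}\,(\partial_\lambda^\beta\Delta^\alpha\mathbf{b}) + (\partial_\lambda^\beta\Delta^\alpha\mathbf{b})\,\mathbf{b}$ on the left, with a right-hand side that is $\partial_\lambda^\beta\Delta^\alpha\mathbf{a}(\lambda,m)$ minus a sum of products of strictly-lower-order derivatives of $\mathbf{b}$; by the inductive hypothesis each such product is $O(\langle m\rangle^{r/2 - \alpha'})$ with $\alpha' \geq \alpha$ — one has to bookkeep carefully that the worst term in the product sum still obeys the $\langle m\rangle^{r/2-\alpha}$ bound, which works because the "missing half" of the order from each factor recombines correctly (two factors of $\mathbf{b}$-type contribute $\langle m\rangle^{r/2}\cdot\langle m\rangle^{r/2} = \langle m\rangle^r$, matching $\mathbf{a}$). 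Combined with $\norm{\mathbf{b}^{-1}} = O(\langle m\rangle^{-r/2})$ from ellipticity, Lemma~\ref{lem:hermite-sqrt-estimate} yields the desired bound on $\partial_\lambda^\beta\Delta^\alpha\mathbf{b}$, locally uniformly in $\lambda$ because every ingredient is. Finally, smoothness of $\lambda\mapsto\mathbf{b}(\lambda,\cdot)$ as a curve in $\mathcal{S}^{r/2}(\ZZ)$ follows from these locally-uniform $\partial_\lambda^\beta$-bounds via the characterization in Lemma preceding Proposition~\ref{prop:smoothness-Ac} (using Lemma~\ref{lem:smooth-curves}), exactly as in the parameter-free case of \cite[Lemma~4.7]{BEK2015}.

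\textbf{The converse direction.} For the square, set $\mathbf{c}(\lambda,m) = \mathbf{a}(\lambda,m)^2$; here no Sylvester equation is needed. One simply applies $\partial_\lambda^\beta\Delta^\alpha$ to the product $\mathbf{a}\cdot\mathbf{a}$ via the Leibniz rule (in $\partial_\lambda$) combined with the discrete Leibniz rule for $\Delta^\alpha$ of a product, and estimates term by term: a product $\bigl(\partial_\lambda^{\beta_1}\Delta^{\alpha_1}\mathbf{a}\bigr)\bigl(\partial_\lambda^{\beta_2}\Delta^{\alpha_2}\mathbf{a}\bigr)$ with $\alpha_1+\alpha_2=\alpha$, $\beta_1+\beta_2=\beta$, evaluated at shifted arguments $m, m+\alpha_1$ etc., is bounded by $C\,\langle m\rangle^{r-\alpha_1}\langle m\rangle^{r-\alpha_2} \leq C'\langle m\rangle^{2r-\alpha}$ locally uniformly (using that $\langle m+k\rangle \asymp \langle m\rangle$ for fixed $k$). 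Positivity, Hermitian symmetry and ellipticity of the square are trivial as noted. This gives membership in $\mathcal{E}^{2r}_\lambda(\ZZ)$.

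\textbf{Main obstacle.} The only genuinely delicate point is the inductive symbol estimate for the square root: one must set up the Leibniz expansion of $\partial_\lambda^\beta\Delta^\alpha(\mathbf{b}^2)$ so that the top-order unknown appears in a Sylvester configuration $\mathbf{b}\,\mathbf{x}+\mathbf{x}\,\mathbf{b}$ (up to a controllable perturbation coming from the argument shifts $\mathbf{b}(\lambda,m)$ vs.\ $\mathbf{b}(\lambda,m+j)$), and then verify that every lower-order remainder term, after applying the inductive hypothesis, still fits under the target bound $C_{\alpha,\beta}\langle m\rangle^{r/2-\alpha}$ — this requires the careful "exponent accounting" that each pair of $\mathbf{b}$-factors recovers exactly one full power-count of $\mathbf{a}$. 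Everything else is routine and follows the template of \cite[Lemmas~4.7--4.8]{BEK2015} adapted by carrying the extra parameter $\lambda$ and tracking local uniformity.
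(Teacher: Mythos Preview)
Your proposal is correct and follows essentially the same route as the paper: induction on $\alpha+\beta$, Leibniz expansion of $\partial_\lambda^\beta\Delta^\alpha(\mathbf{b}^2)=\partial_\lambda^\beta\Delta^\alpha\mathbf{a}$ to isolate the Sylvester term $\mathbf{b}\,\mathbf{x}+\mathbf{x}\,\mathbf{b}$, then Lemma~\ref{lem:hermite-sqrt-estimate} together with the ellipticity bound $\norm{\mathbf{b}^{-1}}\lesssim\langle m\rangle^{-r/2}$ to close the induction. You are in fact more explicit than the paper on several points---the argument-shift issue in the discrete Leibniz rule for $\Delta$, the ellipticity and positivity checks, and the converse direction---none of which the paper spells out, so your write-up would stand on its own.
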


\begin{proof}
  We will prove the estimate
  \begin{equation*}
    \norm{\partial^\beta_{\lambda} \Delta^{\alpha} \mathbf{b}(\lambda, m)} \lesssim \langle m\rangle ^{r/2-\alpha}\,,
  \end{equation*}
  which holds locally uniformly in $\lambda$, by induction over $\alpha + \beta$. If $\alpha + \beta = 0$ the statement is $\| \mathbf{b}(\lambda, m) \| \lesssim \langle m \rangle^{r/2}$. Assume that it has been proven for $\alpha + \beta \leq k$. Then let $\alpha + \beta = k+1$ and, omitting the arguments $(\lambda, m)$, we obtain using the product rule,
  \begin{equation*}
    \partial^\beta_{\lambda} \Delta^{\alpha} \mathbf{a} = \mathbf{b}\left( \partial^\beta_{\lambda} \Delta^{\alpha} \mathbf{b}\right) + \left( \partial^\beta_{\lambda} \Delta^{\alpha} \mathbf{b} \right) \mathbf{b}
    + \sum_{(i,j) \in X} \binom{\alpha}{i} \binom \beta j \partial^{j}_{\lambda} \Delta^i \mathbf{b}\, \partial^{\beta-j}_{\lambda} \Delta^{\alpha-i} \mathbf{b}\,,
  \end{equation*}
  with
  \begin{equation*}
    X = \set{(i,j) \,:\, 0 \leq i \leq \alpha,\, 0 \leq j \leq \beta,\, i + j < \alpha + \beta}\,.
  \end{equation*}
  Then by the induction assumption
  \begin{equation*}
    \norm{\partial^{j}_{\lambda} \Delta^i \mathbf{b}\, \partial^{\beta-j}_{\lambda} \Delta^{\alpha-i} \mathbf{b}}
    \lesssim \langle m \rangle^{r/2-i} \langle m \rangle^{r/2-\alpha+i} \lesssim \langle m \rangle^{r-\alpha}\,,
  \end{equation*}
  and hence we obtain via Lemma~\ref{lem:hermite-sqrt-estimate},
  \begin{equation*}
    \norm{\partial^\beta_{\lambda} \Delta^{\alpha} \mathbf{b}} \lesssim \langle m \rangle^{-r/2} \langle m \rangle^{r-\alpha}
    \lesssim \langle m \rangle^{r/2-\alpha}\,.
  \end{equation*}
  This completes the induction.
\end{proof}

\bibliographystyle{abbrv}
\bibliography{refs}
\end{document}